\newcommand{\be}{\begin{equation}}
\newcommand{\ee}{\end{equation}}
\newcommand{\bel}{\begin{equation}\label}
\newcommand{\ba}{\begin{aligned}}
\newcommand{\ea}{\end{aligned}}
\newcommand{\mul}{\begin{multline}}
\newcommand{\emul}{\begin{multline}}
\newcommand{\eee}{{\rm e}}
\DeclareMathOperator{\1}{\mathbbm{1}}
\newcommand{\ve}{\varepsilon}
\newcommand{\wt}{\widetilde}
\newcommand{\mn}{{\mathbb N}}
\newcommand{\me}{{\mathbb E}}
\newcommand{\mmp}{\mathbb{P}}
\newcommand{\mbZ}{{\mathbb Z}}
\newcommand{\mbR}{{\mathbb R}}
\newcommand{\mr}{\mathbb{R}}
\newcommand{\Pb}{\mathbb{P}}
\theoremstyle{plain}
\newtheorem{thm}{Theorem}[section]
\newtheorem{lem}{Lemma}[section]
\newtheorem{prop}{Proposition}[section]
\newtheorem{corl}{Corollary}[section]
\theoremstyle{definition}
\theoremstyle{remark}
\newtheorem{remk}{Remark}[section]
\begin{document}
\title{On a discrete approximation of a skew stable L\'{e}vy process}
\date{}
\author{Congzao Dong\footnote{School of Mathematics and Statistics, Xidian University, Xi'an, P.R. China;\newline e-mail address: czdong@xidian.edu.cn} \ \ Oleksandr Iksanov\footnote{Faculty of Computer Science and Cybernetics, Taras Shevchenko National University of Kyiv, Ukraine; e-mail address: iksan@univ.kiev.ua} \ \ Andrey Pilipenko\footnote{Institute of Mathematics of Ukrainian National Academy of Sciences, Ukraine and
Igor Sikorsky Kyiv Polytechnic Institute, Ukraine; e-mail address: apilip@imath.kiev.ua}}

\maketitle

\begin{abstract}
Iksanov and Pilipenko (2023) defined a skew stable L\'{e}vy process as a scaling limit of a sequence of perturbed at $0$ symmetric stable L\'{e}vy processes (continuous-time processes). Here, we provide a simpler construction of the skew stable L\'{e}vy process as a scaling limit of a sequence of perturbed at $0$ standard random walks (random sequences).
\end{abstract}

\noindent Key words: functional limit theorem; locally perturbed standard random walk; Poissonization; resolvent; skew stable L\'{e}vy process

\noindent 2020 Mathematics Subject Classification: Primary:
60F17, 60J35
\hphantom{2020 Mathematics Subject Classification: } Secondary: 60G50

\vskip15pt
\section{Introduction}\label{sec:Intro}
Let $\xi_1$, $\xi_2,\ldots$ be independent copies of a random variable $\xi$ with zero mean % $\me \xi=0$
and a $1$-arithmetic distribution. The latter means that the distribution of $\xi$ is concentrated on the set of integers $\mathbb{Z}$ and not concentrated on $d\mathbb{Z}$ for any  $d\geq 2$. % $d>0$.
Denote by $S_\xi:=(S_\xi(n))_{n\in\mn_0}$ ($\mn_0:=\mn\cup\{0\}$) the zero-delayed standard random walk with increments $\xi_n$ for $n\in\mn$, that is, $S_\xi(0):=0$ and $S_\xi(n):=\xi_1+\ldots+\xi_n$ for $n\in\mn$. Denote by $D:=D[0,\infty)$ the Skorokhod space, that is, the space of c\`{a}dl\`{a}g functions defined on $[0,\infty)$. We always assume that $D$ is endowed with the $J_1$-topology and write $\Rightarrow$ for weak convergence in this space.

It is known (see, for instance, Theorem P8 on p.~23 in \cite{Spitzer:2001}) that $S_\xi$ visits every integer point, and particularly $0$, infinitely often almost surely (a.s.). Let $X:=(X(n))_{n\in\mn_0}$ be a Markov chain with transition probabilities $$\mmp\{X(n+1)=j \ |\ X(n)=i\}=\mmp\{S_\xi(n+1)=j\ |\ S_\xi(n)=i\}$$ for integer $i\neq 0$ and some other transition probabilities for $i=0$. The chain $X$ can be thought of as a standard random walk perturbed at $0$. It is natural to ask to what extent classical functional limit theorems for standard random walks, properly scaled and normalized, have to be adjusted in the presence of a slight perturbation.

The answer is known in the case when $\sigma^2:=\me \xi^2\in (0, \infty)$ and the jumps from $0$, not necessarily identically distributed, have finite mean. Investigation of this case was initiated in the seminal article \cite{Harrison+Shepp:1981} and continued in many papers, a survey of relevant literature can be found in Section 1 of \cite{Iksanov+Pilipenko:2016}. While the weak limit on % in
$D$ of $\sigma^{-1}v^{-1/2}S_\xi(\lfloor vt\rfloor)$ as $t\to\infty$ is a Brownian motion, the weak limit of $\sigma^{-1}v^{-1/2}X(\lfloor vt\rfloor)$ is a skew Brownian motion. Recall that a skew Brownian motion $(W_\gamma(t))_{t\geq 0}$ with permeability parameter $\gamma\in[-1,1]$ is a strong Markov process with $W_\gamma(0)=0$ and the transition density
$$p_t(x, y)= \varphi_t(x-y)+ \gamma\, {\rm sign}(y)\,\varphi_t(|x| + |y|),\quad t>0,\ x,y\in\mr,$$ where $\varphi_t(x) = (2\pi t)^{-1/2}\exp(-x^2/(2t))$, $t>0$, $x\in\mr$ is the density of a % the
normal distribution with zero mean and variance $t$, see formula (17) in \cite{Lejay:2006}. This process behaves like a Brownian motion until hitting $0$, then its excursions ``select'' a positive or negative sign with probabilities $%p=
(1+\gamma)/2$ and $%q=
(1-\gamma)/2$, respectively, the subsequent evolution being analogous. It is also known (see pp.~311-312 in \cite{Harrison+Shepp:1981}) that the skew Brownian motion is a unique solution to the % stochastic differential
equation
\bel{eq:skewSDE}
Y(t) =W(t) +\gamma L_0^Y(t),\quad t\geq 0,
\ee
where $W$ is a Brownian motion and $L_0^Y$ is a local time of $Y$ at $0$. According to the claim on p.~312 in \cite{Harrison+Shepp:1981} there is no solution to \eqref{eq:skewSDE} if $|\gamma|>1$.

To the best of our knowledge, the situation where $\sigma^2\in (0,\infty)$ and the jumps from $0$ have infinite mean was only investigated in \cite{Pilipenko+Prykhodko:2014} and \cite{Iksanov+Pilipenko+Povar:2023} under the assumption that the jumps from $0$ are a.s.\ positive and independent with a common distribution belonging to the domain of attraction a $\beta$-stable distribution, $\beta\in(0,1)$. In the latter paper allowance is made that both $\xi$ and the jumps from $0$ are real-valued, whereas in the former these are integer-valued with $\xi$ being bounded from below by $-1$. The corresponding % weak
scaling limit is a Brownian motion with jump-exit from $0$ of infinite intensity, see Theorem 1.1 in \cite{Pilipenko+Prykhodko:2014} or Theorem 1.1 in \cite{Iksanov+Pilipenko+Povar:2023}.

For $\alpha\in (1,2)$, let $U_\alpha:=(U_\alpha(t))_{t\geq 0}$ be a symmetric $\alpha$-stable L\'{e}vy process with
\begin{equation}\label{eq:charU}
\me \exp({\rm i}z(U_\alpha(t)-U_\alpha(s))=\exp(-(t-s)|z|^\alpha),\quad z\in\mr,\ t>s\geq 0.
\end{equation}
One may ask how to define a skew stable L\'{e}vy process, that is, a skew version of $U_\alpha$? This intriguing problem remained open for decades. A natural definition of a skew stable L\'{e}vy process was given in the very recent paper \cite{Iksanov+Pilipenko:2023}. We stress that the approach based on selecting a sign of excursion of $U_\alpha$ does not work because any excursion of $U_\alpha$ attains positive and negative values a.s.\ % with probability $1$
in any neighborhood of $0$, see, for instance, Theorem 47.1 in \cite{Sato:1999}.

The idea exploited in \cite{Iksanov+Pilipenko:2023} that we briefly outline % remind
below is to define a skew stable L\'{e}vy process as a weak limit of certain perturbations of $U_\alpha$. Let $\zeta_1$, $\zeta_2,\ldots$ be independent copies of a random variable $\zeta$ with $\mmp\{\zeta=0\}=0$, which are also independent of $U_\alpha$. {We construct an approximating process piece-by-piece. To this end, for} each $\varepsilon>0$, with $\theta_\ve$ denoting a random variable which satisfies $\mmp\{\theta_\ve=0\}=0$ and is independent of $U_\alpha$ and $\zeta_1$, $\zeta_2,\ldots$, put
%\bel{eq:def_of_X}
\begin{equation*}
\ba
\sigma_0:=0,\quad \sigma_{k+1}:=\inf\{t>\sigma_k\ : \ Y_\ve(\sigma_k)+ U_\alpha(t)-U_\alpha(\sigma_k)=0\}, \quad k\in\mn_0,\\
\quad Y_\ve(0)=\theta_\ve, \quad Y_\ve(\sigma_k):= \ve\zeta_k, \quad k\in\mn,\\
Y_\epsilon(t):=Y_\ve(\sigma_k)+ U_\alpha(t)-U_\alpha(\sigma_k),\quad t\in[\sigma_k, \sigma_{k+1}),\quad k\in\mn_0.
\ea
\end{equation*}
%\ee
Thus, for each $\varepsilon>0$, $Y_\ve:=(Y_\ve(t))_{t\geq 0}$ makes a jump upon each arrival to $0$, and the size of the $k$th jump from $0$ is equal to $\ve\zeta_k$. The increments of $Y_\ve$ and $U_\alpha$ coincide on any time interval between successive visits of $Y_\varepsilon$ to $0$.

For a strong Markov process $X$, put
\[
\sigma(X):= \inf\{ t\geq 0\ : \ X(t)=0\}
\]
with the usual convention that the infimum taken over the empty set is equal to $+\infty$, so that $\sigma(X)$ is the first hitting time of $0$ by $X$. Denote by $R^{X}_\lambda$ and $V^{X}_\lambda$ the resolvents of the processes $X$ and $X$ killed upon hitting $0$, respectively, that is,
\[
R^{X}_\lambda f(x):=\me_x\int_0^\infty \eee^{-\lambda t} f(X(t)){\rm d}t,\quad \lambda>0
\]
and
\[
V^X_\lambda f(x):=\me_x\int_0^{\sigma(X)}\eee^{-\lambda t} f(X(t)){\rm d}t,\quad \lambda>0
\]
for bounded continuous functions $f: \mr\to\mr$. Sometimes, when there is no ambiguity, we shall write $R_\lambda$ and $V_\lambda$ % $X_\lambda$
in place of $R^X_\lambda$ and $V^X_\lambda$.

Here is a slight reformulation of Theorem A in \cite{Iksanov+Pilipenko:2023}.
\begin{prop}\label{thm:SKEW_LEVY}
(a) Let $\beta\in(0,\alpha-1)$ and $\eta^\ast$ be a measure defined by
\begin{equation}\label{eq:measure_eta}
\eta^\ast({\rm d}x)=(c_-\1_{(-\infty,\, 0)}(x)+c_+\1_{(0,\,\infty)}(x))|x|^{-(1+\beta)}{\rm d}x,\quad x\in\mathbb{R}
\end{equation}
for nonnegative $c_\pm$ satisfying $c_++c_->0$.  The function $R_\lambda$ defined by
\bel{eq:RES}
R_\lambda f(x) =V_\lambda f(x)+\frac{\int_{\mbR}V_\lambda f(y)\eta^\ast({\rm d}y)}{\lambda \int_{\mbR}V_\lambda 1 (y)\eta^\ast({\rm d}y)}\me_x \eee^{-\lambda \sigma(U_\alpha)},\quad \lambda>0,\ x\in\mr
\ee
is the resolvent of a Feller process. Here, the equality holds for any bounded continuous function $f:\mr\to\mr$. %and $\eta^\ast$ is a measure defined by
%\begin{equation}\label{eq:measure_eta}
%\eta^\ast({\rm d}x)=(c_-\1_{(-\infty,\, 0)}(x)+c_+\1_{(0,\,\infty)}(x))|x|^{-(1+\beta)}{\rm d}x,\quad x\in\mathbb{R}
%\end{equation}
%for nonnegative $c_\pm$ satisfying $c_++c_->0$.

\noindent (b) Assume that the random variables $Y_{\ve}(0)$ converge in distribution as $\ve\to0+$ to some random variable $\theta$.

\noindent If the distribution of $\zeta$ belongs to the domain of attraction of a $\beta$-stable distribution, $\beta\in (0,\alpha-1)$,
then the processes $Y_{\ve}$ converge weakly on $D$ to a process that starts at $\theta$ and has the resolvent given in \eqref{eq:RES} with  $c_\pm$ in \eqref{eq:measure_eta} defined by
 \[
  c_\pm:=\lim_{x\to +\infty}\frac{\mmp\{\pm \zeta >x\}}{\mmp\{|\zeta|>x\}}.
 \]
If $\me |\zeta|<\infty $ or the distribution of $\zeta$ belongs to the domain of attraction of a $\beta$-stable distribution with $\beta\in (\alpha-1, 1)$, then $Y_{\ve}\Rightarrow \theta +U_\alpha$ on $D$ as $\varepsilon\to 0+$, where $\theta$ and $U_\alpha$ are independent, and $U_\alpha(0)=0$ a.s.
\end{prop}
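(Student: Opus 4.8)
\noindent\emph{Sketch of a proof.} For part (a) the plan is to verify that $(R_\lambda)_{\lambda>0}$ is a Feller resolvent, i.e.\ satisfies the Hille--Yosida--Ray conditions. Write $v_\lambda(x):=\me_x\eee^{-\lambda\sigma(U_\alpha)}$, so that $0\le v_\lambda\le1$, $v_\lambda(0)=1$, $v_\lambda(x)\to0$ as $|x|\to\infty$, and $1-v_\lambda=\lambda V_\lambda 1$, where $V_\lambda:=V^{U_\alpha}_\lambda$. Positivity of $R_\lambda$ and the conservativeness identity $\lambda R_\lambda 1\equiv1$ are immediate from \eqref{eq:RES}. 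Finiteness of the normalising integral $\int_{\mr}V_\lambda 1\,\mathrm{d}\eta^\ast$ rests on the classical tail asymptotics $\mmp_1\{\sigma(U_\alpha)>t\}\sim c\,t^{-(1-1/\alpha)}$ as $t\to\infty$, which by the self-similarity of $U_\alpha$ and a Tauberian theorem gives $1-v_\lambda(x)\asymp\lambda^{1-1/\alpha}|x|^{\alpha-1}$ as $x\to0$; this is $\eta^\ast$-integrable near $0$ precisely because $\beta<\alpha-1$, and near $\infty$ because $\beta>0$. The resolvent identity $R_\lambda-R_\mu=(\mu-\lambda)R_\lambda R_\mu$ is a routine algebraic verification built on the resolvent identity for $V_\lambda$ and on $v_\lambda-v_\mu=(\mu-\lambda)V_\lambda v_\mu$ (the Markov property applied at a time before $\sigma(U_\alpha)$); and $R_\lambda$ maps $C_\infty(\mr)$, the continuous functions on $\mr$ vanishing at infinity, to itself because $V_\lambda f$ and $v_\lambda$ lie in $C_\infty(\mr)$. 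The one genuinely delicate condition is strong continuity, $\lambda R_\lambda f\to f$ as $\lambda\to\infty$: one has $\lambda V_\lambda f(x)\to f(x)$ for $x\ne0$ but $\lambda V_\lambda f(0)=0$, so $\lambda V_\lambda f$ carries a ``defect'' $f(0)$ at the origin, which must be cancelled by the second term of \eqref{eq:RES}; this works because $v_\lambda(x)\to0$ for $x\ne0$ while $v_\lambda(0)=1$, provided one shows $\int V_\lambda f\,\mathrm{d}\eta^\ast\big/\int V_\lambda 1\,\mathrm{d}\eta^\ast\to f(0)$. I would prove the latter by splitting $\eta^\ast$ at the scale $|x|=\lambda^{-1/\alpha}$, using that $V_\lambda 1(x)\asymp\lambda^{-1/\alpha}|x|^{\alpha-1}$ for $|x|<\lambda^{-1/\alpha}$ and $V_\lambda 1(x)\asymp\lambda^{-1}$ for $|x|>\lambda^{-1/\alpha}$ (both contributing order $\lambda^{-(\alpha-\beta)/\alpha}$ to $\int V_\lambda 1\,\mathrm{d}\eta^\ast$), while $V_\lambda f(x)=f(0)V_\lambda 1(x)+O(\lambda^{-1}|x|)$ near $0$ for $f$ in a core (from the regularity of $R^{U_\alpha}_\lambda f$), so that the relative error is $O(\lambda^{-\beta/\alpha})$.

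Turning to part (b): for each fixed $\ve>0$ the process $Y_\ve$ is Feller, and applying its strong Markov property at the successive hitting times of $0$ (from which $Y_\ve$ jumps instantaneously to $\ve\zeta$) yields the explicit resolvent
\[
R^{(\ve)}_\lambda f(x)=V_\lambda f(x)+v_\lambda(x)\,R^{(\ve)}_\lambda f(0)\quad(x\ne0),\qquad R^{(\ve)}_\lambda f(0)=\frac{\me\bigl[V_\lambda f(\ve\zeta)\bigr]}{\lambda\,\me\bigl[V_\lambda 1(\ve\zeta)\bigr]}.
\]
Since \eqref{eq:RES} has the same form, $R_\lambda f(x)=V_\lambda f(x)+v_\lambda(x)R_\lambda f(0)$, and $\|v_\lambda\|_\infty=1$, one gets $\|R^{(\ve)}_\lambda f-R_\lambda f\|_\infty=\bigl|R^{(\ve)}_\lambda f(0)-R_\lambda f(0)\bigr|$, so the whole matter reduces to the limit of the single number $R^{(\ve)}_\lambda f(0)$ as $\ve\to0$.

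In the first regime ($\zeta$ in the domain of attraction of a $\beta$-stable law, $\beta\in(0,\alpha-1)$) set $a_\ve:=\mmp\{|\zeta|>1/\ve\}$; the hypothesis, including the tail balance defining $c_\pm$, is equivalent to the vague convergence on $\mr\setminus\{0\}$ of $a_\ve^{-1}$ times the law of $\ve\zeta$ to a positive constant multiple of $\eta^\ast$. Hence $a_\ve^{-1}\me[V_\lambda g(\ve\zeta)]$ converges to the same multiple of $\int V_\lambda g\,\mathrm{d}\eta^\ast$ for $g\in\{f,1\}$, the constants cancelling in the ratio, so $R^{(\ve)}_\lambda f(0)\to R_\lambda f(0)$. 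Making this rigorous requires uniform integrability at the two ends of $\mr$: near $0$ via $|V_\lambda g|\le\|g\|_\infty V_\lambda 1=O(|x|^{\alpha-1})$, the strict inequality $\beta<\alpha-1$ and Karamata's theorem; near $\infty$ via $V_\lambda f\in C_\infty(\mr)$, boundedness of $V_\lambda 1$, and regular variation of the tail of $\zeta$ (so $\beta>0$ suffices). In the complementary regime ($\me|\zeta|<\infty$, or $\zeta$ attracted to a $\beta$-stable law with $\beta\in(\alpha-1,1)$) one has $\me|\zeta|^{\alpha-1}<\infty$, so the leading contribution to $\me[V_\lambda f(\ve\zeta)]$ and $\me[V_\lambda 1(\ve\zeta)]$ comes from $\{\ve|\zeta|\to0\}$, where $V_\lambda f(x)\sim\lambda R^{U_\alpha}_\lambda f(0)\,V_\lambda 1(x)$ (again from the strong Markov decomposition of $V_\lambda f$, using $\alpha-1<1$), whereas the contribution of $\{\ve|\zeta|\ge1\}$ is $O(\mmp\{|\zeta|>1/\ve\})=o(\ve^{\alpha-1})$; therefore $R^{(\ve)}_\lambda f(0)\to R^{U_\alpha}_\lambda f(0)$, i.e.\ $R^{(\ve)}_\lambda\to R^{U_\alpha}_\lambda$, the resolvent of $U_\alpha$.

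Once $\|R^{(\ve)}_\lambda f-R_\lambda f\|_\infty\to0$ for every $f$ in a core, the Trotter--Kato theorem gives convergence of the associated Feller semigroups on $C_\infty(\mr)$, and the classical theorem on convergence of Feller processes (as in Ethier and Kurtz), together with the assumed convergence $Y_\ve(0)=\theta_\ve\Rightarrow\theta$ of the initial distributions, yields $Y_\ve\Rightarrow Y$ in $D$ (with the $J_1$-topology), tightness being automatic; here $Y$ starts at $\theta$ and has resolvent \eqref{eq:RES} in the first regime, while $Y=\theta+U_\alpha$ with independent $\theta$ and $U_\alpha$ and $U_\alpha(0)=0$ a.s.\ in the second. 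I expect the technical heart of the argument to be the sharp two-scale behaviour of $V_\lambda 1$ and $V_\lambda f$ near the origin (used for strong continuity in (a) and for identifying the limiting resolvent at $0$ in (b)), together with the uniform-integrability bookkeeping in the regular-variation limit of $\me[V_\lambda g(\ve\zeta)]$, where the mass of the law of $\ve\zeta$ near $0$ and near $\infty$ enter at the same order $a_\ve$ and must both be controlled; the algebraic resolvent identity and the invocation of the semigroup-convergence machinery are routine.
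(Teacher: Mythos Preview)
The paper does not contain a proof of this proposition. It is stated as ``a slight reformulation of Theorem A in \cite{Iksanov+Pilipenko:2023}'' and is invoked as a known result; the reader is referred to that article for the argument. Consequently there is no ``paper's own proof'' against which to compare your sketch.

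That said, your outline is very much in the spirit of the resolvent approach that the present paper attributes to \cite{Iksanov+Pilipenko:2023} and that it adapts in its own proof of Theorem~\ref{thm:lim_skew_Levy_walk}(a). In particular, your reduction of part~(b) to the convergence of the single number $R^{(\ve)}_\lambda f(0)$, together with the regular-variation/uniform-integrability analysis of $\me[V_\lambda g(\ve\zeta)]$, parallels exactly the role played here by Lemma~\ref{eq:reg231} (an extension of Lemma~2.4 in \cite{Iksanov+Pilipenko:2023}) and Lemma~\ref{lem:asym}. Your bound $V_\lambda 1(x)=O(|x|^{\alpha-1})$ near $0$ is the continuous-time counterpart of Lemma~\ref{lem:asym}, and the combination $\beta<\alpha-1$ with Karamata is precisely how the integrability near $0$ is handled in both places.

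Two cautions. First, be careful with the blanket assertion that $Y_\ve$ is Feller: because the process jumps \emph{instantaneously} from $0$ (no exponential holding time), the state $0$ is somewhat degenerate, and in the paper's discrete-time analogue this is sidestepped by Poissonization, which produces a genuine holding-and-jumping process to which Lemma~\ref{lem:jumphold} applies. You should check that $R^{(\ve)}_\lambda$ really maps $C_0(\mr)$ to itself and that Proposition~\ref{thm:weak_convergenceEK_chains} (or the Trotter--Kato machinery you invoke) applies directly, or else insert a holding-time regularisation. Second, the paper remarks that its proof of Theorem~\ref{thm:lim_skew_Levy_walk}(b) is ``much simpler'' than the proof of Proposition~\ref{thm:SKEW_LEVY}(b) in \cite{Iksanov+Pilipenko:2023}; since the paper's (b) is proved pathwise via the representation \eqref{eq:Levy_repr_sum2} and a local-time estimate rather than through resolvents, it is quite possible that your clean resolvent argument for the second regime is \emph{not} how the cited reference actually proceeds.
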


The process $U_{\alpha,\, \beta}:=(U_{\alpha,\, \beta}(t))_{t\geq 0}$, with the resolvent given in \eqref{eq:RES}, is defined  in \cite{Iksanov+Pilipenko:2023} and called a skew $\alpha$-stable L{\'e}vy process. It is a strong Markov process that behaves like $U_\alpha$ until hitting $0$ and has a `jump-type' exit from $0$ of infinite intensity. The process $U_{\alpha,\, \beta}$ is characterized in \cite{Iksanov+Pilipenko:2023} by means of Ito's excursion theory and also as a solution to an equation involving a local time. For instance, $Y:=U_{\alpha, \beta}$ is a (weak) solution to
\[
Y(t) =Y(0)+ U_\alpha(t) +\mathcal{S}_\beta(L^Y_0(t)),\quad t\geq 0,
\]
where $\mathcal{S}_\beta$ is a $\beta$-stable L{\'e}vy process, which is independent of $U_\alpha$, with the L{\'e}vy measure being a constant multiple of $\eta^\ast$ in \eqref{eq:measure_eta}, and $L^Y_0$ is the Blumenthal-Getoor local time of $Y$ at $0$.

\section{Main result} \label{section:2_3Perturbed_RW_Levy}
For a real-valued random variable $\tau$ we shall denote by $S_\tau$ a standard random walk with increments $\tau_n$ for $n\in\mn$, where $\tau_1$, $\tau_2,\ldots$ are independent copies of $\tau$. Assume that the distribution of $\tau$ belongs to the domain of attraction of a $\gamma$-stable distribution  with $\gamma\in (0,2)\backslash\{1\}$. Then
$$\mmp\{|\tau|>x\}~\sim~ x^{-\gamma}\ell(x),\quad x\to\infty$$ and $$\mmp\{\tau>x\}~\sim~c_+\mmp\{|\tau|>x\}\quad\text{and} \quad \mmp\{-\tau>x\}~\sim c_-\mmp\{|\tau|>x\},\quad x\to\infty$$ for some $\ell$ slowly varying at $\infty$ and some nonnegative $c_+$ and $c_-$ summing up to one. According to a classical Skorokhod's result (Theorem 2.7 in \cite{Skorokhod:1957})
\begin{equation}\label{eq:conv}
\Big(\frac{S_\tau(\lfloor vt\rfloor)}{c(v)}\Big)_{t\geq 0}~\Rightarrow~ \mathcal{S}_\gamma,\quad v\to\infty
\end{equation}
on $D$, where $c$ is a positive function satisfying $\lim_{x\to\infty}x\mmp\{|\tau|>c(x)\}=1$ and $\mathcal{S}_\gamma:=(\mathcal{S}_\gamma(t))_{t\geq 0}$ is a $\gamma$-stable L{\'e}vy process with the characteristic function
\begin{equation}\label{chf}
\me \exp({\rm i}z\mathcal{S}_\gamma(t))= \exp(t|z|^\gamma(\Gamma(2-\gamma)/(\gamma-1))(\cos(\pi\gamma/2)-{\rm i}(c_+-c_-) \sin(\pi\gamma/2)\,{\rm sign}\,z)
\end{equation}
for $z\in\mr$. Here, $\Gamma$ denotes the gamma function. If, for instance, $\mmp\{|\tau|>x\}\sim Ax^{-\gamma}$ as $x\to\infty$ for a constant $A\in (0,\infty)$, then one may take $c(v)=(Av)^{1/\gamma}$. In general, $c$ is a function which is regularly varying at $\infty$ of index $1/\gamma$.

As in Section \ref{sec:Intro}, let $(X(n))_{n\in\mn_0}$ be a standard random walk perturbed at $0$, that is, a Markov chain with transition probabilities
\[
 \Pb\{X(n+1)=j\ |\ X(n)=i\}=
\begin{cases}
\Pb\{\xi=j-i\},~\text{if}~i\neq 0;\\
\Pb\{\eta=j\},~ \text{if}~ i= 0,\\
\end{cases}
 \]
where $\eta $ is an integer-valued random variable with $\mmp\{\eta=0\}<1$.

In addition to the conditions imposed on the distribution of $\xi$ in Section \ref{sec:Intro} we assume that the distribution of $\xi$ belongs to the domain of attraction of a symmetric $\alpha$-stable distribution with $\alpha\in (1,2)$. Thus, in the setting of the next to the last paragraph $\tau=\xi$, $\gamma=\alpha$ and $c_+=c_+=1/2$. Further,
\bel{eq:convStable1823}
\Big(\frac{S_\xi([vt])}{a(v)}\Big)_{t\geq 0}~\Rightarrow~  U_\alpha,\quad v\to\infty
\ee
on $D$, where $U_\alpha$ is a symmetric $\alpha$-stable L{\'e}vy process satisfying \eqref{eq:charU} and $U_\alpha(0)=0$ a.s., and $a$ is any positive function satisfying $$\lim_{x\to\infty}x\mmp\{|\xi|>a(x)\}=-(\Gamma(2-\alpha)/(\alpha-1))\cos(\pi\alpha/2).$$ The latter limit relation is a specialization of \eqref{eq:conv} and \eqref{chf}. Also, we assume that either $\me |\eta|<\infty$ or the distribution of $\eta$ belongs to the domain of attraction of a $\beta$-stable distribution with $\beta\in (0,1)$. In particular, in the latter case $$\Big(\frac{S_\eta(\lfloor vt\rfloor)}{c(v)}\Big)_{t\geq 0}~\Rightarrow~ \mathcal{S}_\beta,\quad v\to\infty$$ on $D$ and $\lim_{v\to\infty}(a(v)/c(v))=0$ because $\alpha>\beta$.

For each $v>0$, let $(X_v(n))_{n\in\mn_0}$ be a Markov chain having the same transition probabilities as $(X(n))_{n\in\mn_0}$ but possibly satisfying a different initial condition. We are ready to state the main result of the paper.
\begin{thm}\label{thm:lim_skew_Levy_walk}
Let $x\in\mr$ and assume that $X_v(0)/a(v)$ converges in probability to $x$ as $v\to\infty$.

\noindent  (a) If the distribution of $\eta$ belongs to the domain of attraction of a $\beta$-stable distribution with $\beta<\alpha-1$, then
 \[
\Big(\frac{X_v(\lfloor vt \rfloor)}{a(v)}\Big)_{t\geq 0}~\Rightarrow~ \big(U_{\alpha, \beta}(x,t)\big)_{t\geq 0},\quad  v\to\infty
 \]
 on $D$, where  $(U_{\alpha,\, \beta}(x,t))_{t\geq 0}$ is a skew stable L\'evy process starting from $x$.

\noindent
(b) If $\me |\eta|<\infty$ or the distribution of $\eta$ belongs to the domain of attraction of a $\beta$-stable distribution with $\beta>\alpha-1$,
then
\begin{equation}\label{eq:FCLT2}
\Big(\frac{X_v(\lfloor vt\rfloor)}{a(v)}\Big)_{t\geq 0}~\Rightarrow~ \big(x+U_\alpha(t)\big)_{t\geq 0},\quad v\to\infty,
\end{equation}
where $U_\alpha$ is a symmetric $\alpha$-stable L\'evy process satisfying $U_\alpha(0)=0$ a.s.
\end{thm}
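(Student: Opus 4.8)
The plan is to prove convergence of the resolvents of the Poissonized, time‑rescaled chains to the resolvent in \eqref{eq:RES} (in case~(b), to the resolvent of $x+U_\alpha$), and then to upgrade this to weak convergence on $D$ by a tightness argument; since the candidate limit is a Feller process by Proposition~\ref{thm:SKEW_LEVY}(a) (in case~(b) it is $x+U_\alpha$), its law is determined by its resolvent. Let $N$ be a rate‑one Poisson process independent of $X_v$ and $\eta$, and put $\wt X_v(t):=X_v(N(vt))$; since $N(vt)/v\to t$ a.s.\ uniformly on compacts, $\big(a(v)^{-1}X_v(\lfloor v\cdot\rfloor)\big)$ and $\big(a(v)^{-1}\wt X_v(\cdot)\big)$ have the same weak limits on $D$, so it suffices to study $\wt Z_v:=a(v)^{-1}\wt X_v$. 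Writing $\mathcal{R}^v_\lambda$ for its resolvent, a computation with the interarrival times of $N$ identifies $\mathcal{R}^v_\lambda f(z)$ with $v^{-1}$ times the discounted Green function of $\big(a(v)^{-1}X_v(n)\big)_n$ at discount factor $\rho=\rho_v:=(1+\lambda/v)^{-1}\sim\eee^{-\lambda/v}$, started from $za(v)$. Splitting the path of $X_v$ at its first visit to $0$ — before which it coincides with $S_\xi$ — and exploiting the renewal structure of its successive returns to $0$, into which the perturbation enters only through the single step $0\mapsto\eta$, one obtains for bounded continuous $f:\mbR\to\mbR$
\be
\mathcal{R}^v_\lambda f(z)=\mathcal{V}^v_\lambda f(z)+\me_{za(v)}\big[\rho^{\,\sigma}\big]\,\frac{v^{-1}f(0)+\rho\,\me\big[\mathcal{V}^v_\lambda f(\eta/a(v))\big]}{1-\rho\,\me\big[\rho^{\,\sigma(\eta)}\big]},
\ee
where $\sigma$ is the first hitting time of $0$ by $S_\xi$ started (as indicated by the outer expectation) from $za(v)$, $\sigma(\eta)$ the same hitting time started from an independent copy of $\eta$ (assume for brevity $\mmp\{\eta=0\}=0$), and $\mathcal{V}^v_\lambda f(z):=v^{-1}\sum_{k\ge0}\rho^k\me_{za(v)}\big[f(a(v)^{-1}S_\xi(k))\1_{\{\sigma>k\}}\big]$ is the rescaled resolvent of $S_\xi$ killed at $0$. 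This is the discrete analogue of \eqref{eq:RES}, and it remains to pass to the limit in its three constituents.

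The two ``excursion'' ingredients converge by combining Skorokhod's invariance principle \eqref{eq:convStable1823} with classical local limit theorems for the $1$‑arithmetic walk $S_\xi$. Via a first‑entrance decomposition $\me_{za(v)}[\rho^{\,\sigma}]=G^{S_\xi}_{\rho}(za(v),0)/G^{S_\xi}_{\rho}(0,0)$, a local limit theorem for $\mmp\{S_\xi(k)=\cdot\}$ and regular variation of $a$ give $G^{S_\xi}_{\rho_v}(za(v),0)\sim v\,a(v)^{-1}u_\lambda(z)$, where $u_\lambda$ denotes the $\lambda$‑potential density of $U_\alpha$; hence $\me_{za(v)}[\rho^{\,\sigma}]\to u_\lambda(x)/u_\lambda(0)=\me_x[\eee^{-\lambda\sigma(U_\alpha)}]$ whenever $z\to x$, including $x=0$. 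Together with $v^{-1}\sum_k\rho^k\me_{za(v)}[f(a(v)^{-1}S_\xi(k))]\to R^{U_\alpha}_\lambda f(x)$ (from \eqref{eq:convStable1823} and bounded convergence) and the strong Markov property at $\sigma$, this yields $\mathcal{V}^v_\lambda f(z)\to V^{U_\alpha}_\lambda f(x)$ as $z\to x$; the boundary regime $z\to0$ is resolved within the same local‑limit estimates.

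The remaining ingredient is the ratio in the decomposition, and here the dichotomy between $\beta<\alpha-1$ and $\beta>\alpha-1$ emerges. Its numerator equals $v^{-1}\me_0\big[\sum_{k=0}^{T-1}\rho^k f(a(v)^{-1}X_v(k))\big]$ and its denominator $1-\me_0[\rho^{\,T}]$, where $T$ is the return time of $X_v$ to $0$, so the ratio is the cycle‑averaged discounted occupation of $\wt Z_v$ at $0$. The tail of $T$ is governed by the slower of two mechanisms — a long excursion of $S_\xi$ (tail index $1-1/\alpha$) or a large value of $|\eta|$ enforcing a long return to $0$ (tail index $\beta/\alpha$) — so $\mmp\{T>n\}$ is regularly varying of index $-\min(\beta,\alpha-1)/\alpha$. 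In case~(a), $\beta<\alpha-1$, the heavy tail of $\eta$ dominates: using the tail balance $\mmp\{\pm\eta>x\}\sim c_\pm\mmp\{|\eta|>x\}$ and the estimate $V^{U_\alpha}_\lambda 1(y)\asymp|y|^{\alpha-1}$ as $y\to0$ (which makes $\int_{\mbR}V^{U_\alpha}_\lambda 1(y)\,\eta^\ast({\rm d}y)$ finite precisely when $\beta<\alpha-1$), one shows that $\me[\mathcal{V}^v_\lambda f(\eta/a(v))]$ and $1-\me_0[\rho^{\,\sigma(\eta)}]$ are both of order $\mmp\{|\eta|>a(v)\}$, with asymptotic constants proportional to $\int_{\mbR}V^{U_\alpha}_\lambda f(y)\,\eta^\ast({\rm d}y)$ and $\lambda\int_{\mbR}V^{U_\alpha}_\lambda 1(y)\,\eta^\ast({\rm d}y)$, $\eta^\ast$ as in \eqref{eq:measure_eta} with the above $c_\pm$. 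Hence the ratio tends to $\frac{\int_{\mbR}V^{U_\alpha}_\lambda f(y)\,\eta^\ast({\rm d}y)}{\lambda\int_{\mbR}V^{U_\alpha}_\lambda 1(y)\,\eta^\ast({\rm d}y)}$ and $\mathcal{R}^v_\lambda f\to R_\lambda f$ with $R_\lambda$ as in \eqref{eq:RES}. In case~(b), $\beta>\alpha-1$ or $\me|\eta|<\infty$, that integral diverges at $0$, the large values of $\eta$ become asymptotically negligible, the cycle of $\wt Z_v$ coincides in the limit with the excursion of the unperturbed walk, and the ratio tends instead to $R^{U_\alpha}_\lambda f(0)$; by the identity $R^{U_\alpha}_\lambda f(x)=V^{U_\alpha}_\lambda f(x)+\me_x[\eee^{-\lambda\sigma(U_\alpha)}]R^{U_\alpha}_\lambda f(0)$ this gives $\mathcal{R}^v_\lambda f\to R^{U_\alpha}_\lambda f$, the resolvent of $x+U_\alpha$.

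Resolvent convergence, valid for starting points converging to $x$ (which covers the hypothesis $X_v(0)/a(v)\toP x$), combined with tightness of $\{a(v)^{-1}\wt X_v\}_v$ in $(D,J_1)$ gives the asserted weak convergence, the limit being a Feller process (Proposition~\ref{thm:SKEW_LEVY}(a)) resp.\ $x+U_\alpha$. Tightness I would obtain by controlling excursions: away from $0$, $X_v$ evolves as a copy of $S_\xi$ stopped on hitting $0$, which is tight after rescaling by \eqref{eq:convStable1823}, while between consecutive visits to $0$ the process is pinned back to the vanishing level $\eta_k/a(v)$; thus a large oscillation of $\wt Z_v$ over a short interval must be caused by a single large increment of one $S_\xi$‑excursion (controlled uniformly in $v$), and an Aldous‑type criterion applies, after which de‑Poissonization in $J_1$ is routine. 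The principal obstacle, I expect, is the third paragraph: the simultaneous sharp asymptotics of the cycle‑occupation numerator and of $1-\me_0[\rho^{\,\sigma(\eta)}]$, i.e.\ marrying the local limit theorem and hitting‑time asymptotics for $S_\xi$ near $0$ with the regularly varying tail of $\eta$ so as to recover exactly the normalizing factor in \eqref{eq:RES}; the killed‑Green‑function estimates for starting points collapsing to $0$ used in the second paragraph are a secondary difficulty.
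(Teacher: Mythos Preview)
For part~(a) your approach is essentially the paper's: Poissonize, decompose the resolvent at the first visit to $0$ via the holding-and-jumping formula, and identify the limit of each piece with the corresponding term in \eqref{eq:RES}. The paper, however, proves \emph{uniform} convergence of resolvents over all lattice starting points (relations \eqref{eq:23_107}--\eqref{eq:conv_res_0_skewLevy_seq}) and then invokes a Trotter--Kato type result (Proposition~\ref{thm:weak_convergenceEK_chains}), which delivers weak convergence on $D$ directly; no separate tightness argument is needed. Your tightness sketch via excursion control is thus unnecessary once the resolvent convergence is made uniform --- and the paper's explicit Fourier treatment of the hitting-time Laplace transform (formulae \eqref{eq:res_walk_hitting_laplace378}--\eqref{eq:inter234}, together with the Potter-type bound \eqref{eq:impo}) is designed precisely to produce that uniformity, which a bare local limit theorem would not give without additional work. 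The ratio at~$0$ is handled in the paper by Lemma~\ref{eq:reg231} (an averaging lemma for regularly varying tails against a uniformly convergent family $g_u$) and Lemma~\ref{lem:asym}; this is exactly the ``principal obstacle'' you flag, and your diagnosis of it is accurate.

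For part~(b) the paper takes a genuinely different and much simpler route: it writes $X(n)=X(0)+S_\xi(n-T(n-1))+S_\eta(T(n-1))$, where $T(n)$ counts visits to $0$, and shows directly that $T(\lfloor vt\rfloor)=o(v^{1-1/\alpha+\delta})$ in probability (via the known tail of the return time of $S_\xi$ to $0$), whence the $S_\eta$ term is $o(a(v))$ and the $S_\xi$ term converges by \eqref{eq:convStable1823} composed with a time change. Your resolvent approach for (b) is in principle viable --- it is essentially what \cite{Iksanov+Pilipenko:2023} does for the continuous-time Proposition~\ref{thm:SKEW_LEVY}(b), which the present authors explicitly describe as harder --- but the claim that the ratio tends to $R^{U_\alpha}_\lambda f(0)$ requires a delicate $0/0$ analysis, whereas the pathwise decomposition sidesteps this entirely.
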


Our proof of Theorem \ref{thm:lim_skew_Levy_walk} exploits a resolvent approach and bears significant similarity to the proof of Proposition \ref{thm:SKEW_LEVY}, which can be found  as Theorem A  in \cite{Iksanov+Pilipenko:2023}. In the cited article, the skew $\alpha$-stable L{\'e}vy process was constructed as a scaling limit of small perturbations at $0$ of a symmetric $\alpha$-stable process. The main achievement of Theorem \ref{thm:lim_skew_Levy_walk} is a new construction of a skew stable L{\'e}vy process as a scaling limit of locally perturbed standard random walks. On the technical side, a passage from continuous-time processes to random sequences requires at places additional non-trivial arguments. Last but not least, part (b) of Theorem \ref{thm:lim_skew_Levy_walk} is a discrete-time counterpart of part (b) of Proposition \ref{thm:SKEW_LEVY}. We think our proof of Theorem \ref{thm:lim_skew_Levy_walk}(b) is much simpler than the proof of Proposition \ref{thm:SKEW_LEVY}(b),  see the proof of part (b) of Theorem A  in \cite{Iksanov+Pilipenko:2023}.

There is an essential difference between the cases $\me \xi^2=\sigma^2\in (0, \infty)$ and $\sigma^2=\infty$ when the perturbations have finite means or more generally are sufficiently light-tailed. In the latter case, according to Theorem \ref{thm:lim_skew_Levy_walk}(b) the perturbations have no effect asymptotically, and the scaling limit of the locally perturbed standard random walk is, up to a shift, the same as the scaling limit of the unperturbed random walk. In the former case, according to the results discussed in the third paragraph of Section \ref{sec:Intro}, the scaling
limit of a locally perturbed standard random walk is a skew Brownian motion, rather than a Brownian motion (the scaling limit of the unperturbed random walk).

\section{Auxiliary results}\label{section4Auxiliary}
 In this section we collect several % give
 results on convergence of functions and processes in the space $D$. We start by formulating a fragment of Theorem 13.2.2 on p.~430 in \cite{Whitt:2002}.
\begin{prop} \label{prop:WhittConvCompositions}
For $n\in\mn_0$, let $(f_n, g_n)\in D\times D$. Assume that, for $n\in\mn$, $g_n$ are nonnegative and nondecreasing, that $g_0$ is continuous and increasing, and that $\lim_{n\to\infty} (f_n,g_n)=(f_0,g_0)$ in the $J_1$-topology on $D\times D$. Then  $\lim_{n\to\infty} f_n\circ g_n= f_0\circ g_0$ in the $J_1$-topology on $D$, where $\circ$ denotes composition.
\end{prop}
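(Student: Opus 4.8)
The plan is to combine the time-change characterization of $J_1$-convergence with a second, correcting time change that realigns the jumps of $f_0$; building that correction is the one non-routine ingredient, and it is where the continuity and strict monotonicity of $g_0$ enter. Throughout, ``increasing'' is read as: $g_0$ is a strictly increasing continuous bijection of $[0,\infty)$, so that $g_0(0)=0$. By standard localization it suffices to prove $J_1$-convergence on $D[0,T]$ for each $T$ at which $f_0\circ g_0$ is continuous (note $f_0\circ g_0\in D$ since $g_0$ is continuous). Recall that $(f_n,g_n)\to(f_0,g_0)$ in $J_1$ on $D([0,\infty),\mr^2)$ iff there exist increasing homeomorphisms $\lambda_n$ of $[0,\infty)$ onto itself with $\lambda_n\to\mathrm{id}$, $f_n\circ\lambda_n\to f_0$ and $g_n\circ\lambda_n\to g_0$ uniformly on compacts. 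Set $h_n:=\lambda_n^{-1}\circ g_n\circ\lambda_n$; it is nondecreasing and right-continuous, and $h_n\to g_0$ uniformly on compacts since $\sup_{s\le N}|\lambda_n^{-1}(s)-s|=\sup_{t\le N}|\lambda_n(t)-t|\to0$. As $(f_n\circ g_n)\circ\lambda_n=(f_n\circ\lambda_n)\circ h_n$, it suffices to show $u_n\circ h_n\to f_0\circ g_0$ in $J_1$, where $u_n:=f_n\circ\lambda_n\to f_0$ and $h_n\to g_0$ \emph{uniformly}, $h_n$ is nondecreasing, and $g_0$ is continuous and strictly increasing with $g_0(0)=0$. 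Plain uniform convergence genuinely fails here: e.g. for $g_n=(g_0-1/n)^+$, $\lambda_n=\mathrm{id}$, $f_n=f_0$ one gets $u_n(h_n(t))=f_0((g_0(t)-1/n)^+)\to f_0(g_0(t)-)$ at jump points of $f_0\circ g_0$, so a second time change is needed.

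Fix $\varepsilon>0$ and $N>0$. Only finitely many jumps of $f_0\circ g_0$ in $[0,N]$ exceed $\varepsilon$; let them sit at $0<t_1<\dots<t_m<N$, and put $s_i:=g_0(t_i)$, so that (by continuity and strict monotonicity of $g_0$) these are exactly the points of $g_0([0,N])$ at which $f_0$ jumps by more than $\varepsilon$. Let $\tau_n^{(i)}:=\inf\{t\ge0:h_n(t)\ge s_i\}$. Using that $g_0$ strictly increases through each $t_i$ and $h_n\to g_0$ uniformly, one verifies that for all large $n$ these are finite, lie in $(0,N)$, are strictly increasing in $i$, and $\tau_n^{(i)}\to t_i$. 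Let $\nu_n$ be the increasing homeomorphism of $[0,\infty)$ equal to the identity outside a fixed small neighbourhood of $\{t_1,\dots,t_m\}$, with $\nu_n(t_i)=\tau_n^{(i)}$ and piecewise-linear in between; then $\sup_t|\nu_n(t)-t|\le\max_i|\tau_n^{(i)}-t_i|\to0$. The decisive property — the sole use of strict monotonicity of $g_0$ — is side-consistency: $g_0(t)<s_i\Leftrightarrow t<t_i\Rightarrow\nu_n(t)<\tau_n^{(i)}\Rightarrow h_n(\nu_n(t))<s_i$, while $g_0(t)\ge s_i\Rightarrow t\ge t_i\Rightarrow\nu_n(t)\ge\tau_n^{(i)}\Rightarrow h_n(\nu_n(t))\ge h_n(\tau_n^{(i)})\ge s_i$ (the last step by monotonicity and right-continuity of $h_n$). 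Hence $h_n(\nu_n(t))$ always lies on the same side of every large jump of $f_0$ as $g_0(t)$.

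A routine oscillation estimate now concludes. Choose a partition $0=r_0<\dots<r_\ell$ of an interval eventually containing the ranges of $g_0$ and of $h_n\circ\nu_n$ on $[0,N]$, on each block of which $f_0$ oscillates by less than $\varepsilon$ and whose partition points include every large jump of $f_0$; let $3\delta$ be the shortest block length. For $n$ large enough that $\|h_n\circ\nu_n-g_0\|_{[0,N]}<\delta$ and $\|u_n-f_0\|<\varepsilon$ on that interval, and for $t\in[0,N]$, the points $g_0(t)$ and $h_n(\nu_n(t))$ either lie in one block, whence $|f_0(g_0(t))-f_0(h_n(\nu_n(t)))|<\varepsilon$, or in two adjacent ones, in which case side-consistency forbids the separating partition point to be a large jump of $f_0$, so $f_0$ jumps by at most $\varepsilon$ there and $|f_0(g_0(t))-f_0(h_n(\nu_n(t)))|<3\varepsilon$; the endpoints $t=0$ (where $g_0(0)=0$) and $t=N$ (taken a continuity point of $f_0\circ g_0$) require only minor extra care. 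Adding $\|u_n-f_0\|$ gives $\|u_n\circ h_n\circ\nu_n-f_0\circ g_0\|_{[0,N]}\le4\varepsilon$ for all large $n$. Since $u_n\circ h_n\circ\nu_n=(f_n\circ g_n)\circ(\lambda_n\circ\nu_n)$ with $\lambda_n\circ\nu_n\to\mathrm{id}$ uniformly on compacts, a diagonalization over $\varepsilon=1/k$ and $N=N_k\uparrow\infty$ yields increasing homeomorphisms $\mu_n$ of $[0,\infty)$ with $\mu_n\to\mathrm{id}$ and $(f_n\circ g_n)\circ\mu_n\to f_0\circ g_0$ uniformly on compacts, i.e. $f_n\circ g_n\to f_0\circ g_0$ in $J_1$. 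The main obstacle is the middle step — exhibiting a single $\nu_n$ that simultaneously realigns all large jumps of $f_0$ yet keeps $h_n\circ\nu_n$ on the correct side of each — which is the heart of Theorem 13.2.2 in \cite{Whitt:2002}, the hypotheses on $g_0$ being exactly what makes it possible.
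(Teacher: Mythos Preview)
The paper does not give its own proof of this proposition; it is simply quoted as ``a fragment of Theorem 13.2.2 on p.~430 in \cite{Whitt:2002}'' and then used as a black box. So there is no paper-proof to compare against.

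Your argument is essentially a self-contained reconstruction of Whitt's proof: pull back through the single time change $\lambda_n$ coming from joint $J_1$-convergence, build a second correcting homeomorphism $\nu_n$ that moves the finitely many large-jump times $t_i$ to the first-passage times $\tau_n^{(i)}$ of $h_n$, verify the side-consistency property using strict monotonicity of $g_0$ and right-continuity/monotonicity of $h_n$, and finish with an oscillation partition. The key steps (that $\tau_n^{(i)}\to t_i$, that $h_n\circ\nu_n\to g_0$ uniformly on $[0,N]$ via the splitting $|h_n(\nu_n(t))-g_0(\nu_n(t))|+|g_0(\nu_n(t))-g_0(t)|$, and the two-block alternative in the oscillation estimate) are all correct. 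One small point: you strengthen ``continuous and increasing'' to ``strictly increasing continuous bijection of $[0,\infty)$ with $g_0(0)=0$''; neither $g_0(0)=0$ nor surjectivity is assumed in the stated proposition, so formally you are proving a slightly narrower statement. This costs nothing for the paper, since every application there has $g_0$ equal to the identity, and the extra hypotheses are only used at the boundary point $t=0$ and could be removed with minor bookkeeping.
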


The following fundamental result, called the Skorokhod representation theorem, allows us to treat convergence in distribution as an a.s.\ convergence. We present it as given in Theorem 3.30 on p.~56 in \cite{Kallenberg:1997}.
\begin{prop}\label{thm:SkorokhodRepresentation}
Let $(\theta_n)_{n\in\mn_0}$ be random elements in a separable metric space and assume that $\theta_n$ converges in distribution to $\theta_0$ as $n\to\infty$. Then there is a probability space and a sequence $(\tilde \theta_n)_{n\in\mn_0}$ defined on this space such that, for each $n\in\mn_0$, $\tilde \theta_n$ has the same distribution as $\theta_n$ and $$\lim_{n\to\infty} \tilde \theta_n= \tilde \theta_0\quad\text{{\rm a.s.}}$$
\end{prop}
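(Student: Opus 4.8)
The plan is to prove the Skorokhod representation theorem (Proposition \ref{thm:SkorokhodRepresentation}) by an explicit coupling constructed on a single atomless probability space, driving every $\tilde\theta_n$ by one common uniform random variable. First I would reduce to the case of a standard atomless probability space: since we are free to choose the space, I take $(\Omega,\mathcal F,\mmp)=([0,1],\mathcal B,\mathrm{Leb})$ and let $U$ be the identity map, so $U$ is uniformly distributed on $[0,1]$. The goal is then to build measurable maps $h_n$ from $[0,1]$ into the separable metric space $(M,\rho)$ such that $\tilde\theta_n:=h_n(U)$ has the law $\mu_n:=\mmp\circ\theta_n^{-1}$ and $h_n(U)\to h_0(U)$ a.s.

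The core construction uses a nested sequence of finite measurable partitions of $M$ to quantize the limiting law $\mu_0$. For each $k\in\mn$ I would cover $M$ by countably many disjoint Borel sets of diameter at most $2^{-k}$ (possible because $M$ is separable, hence second countable), refining as $k$ increases. The weak convergence $\mu_n\Rightarrow\mu_0$ lets me match mass: on each fixed partition cell, the $\mu_n$-mass converges to the $\mu_0$-mass (after arranging that the cell boundaries are $\mu_0$-null, which one secures by choosing the covering radii generically). The idea is to first couple a discretized version $\bar\theta_n^{(k)}$ of $\theta_n$ to a discretized limit $\bar\theta_0^{(k)}$ so that they agree (land in the same cell) with probability tending to $1$ as $n\to\infty$ for each fixed $k$; this is where I carve the interval $[0,1]$ into subintervals whose lengths equal the cell masses and read off the cell index from where $U$ falls. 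Then one promotes the discrete coupling to the full laws by a within-cell construction, using that any Borel probability measure on a Polish-type space admits a measurable map from $(\mathrm{Leb})$ realizing it.

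The main obstacle is organizing the two limits — the refinement index $k$ and the sample index $n$ — so that a single sequence $\tilde\theta_n=h_n(U)$ converges almost surely rather than merely in probability on each level. The clean way I would handle this is to extract, via weak convergence, a sequence $k=k(n)\to\infty$ slowly enough that the level-$k(n)$ coupling of $\theta_n$ to $\theta_0$ succeeds with probability at least $1-2^{-n}$; a Borel--Cantelli argument then forces $\rho(\tilde\theta_n,\tilde\theta_0)\to0$ a.s. The measurability of the maps $h_n$ and the verification that $\tilde\theta_n$ genuinely has law $\mu_n$ (not just the quantized law) are routine once the quantile-type construction on $[0,1]$ is in place, but they must be stated carefully. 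I would present the argument for a general separable metric space, noting that separability is exactly what supplies the countable diameter-shrinking covers that make the whole scheme measurable and consistent.
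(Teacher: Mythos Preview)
The paper does not prove this proposition at all: it is quoted as the classical Skorokhod representation theorem, with a citation to Theorem~3.30 in Kallenberg's \emph{Foundations of Modern Probability}, and then used as a black box. Your sketch is essentially the standard construction one finds in Kallenberg (or in Dudley's original 1968 paper): nested finite partitions of $M$ into $\mu_0$-continuity sets of shrinking diameter, a quantile-type allocation of $[0,1]$ to cells, a diagonal choice $k=k(n)\to\infty$ together with Borel--Cantelli to upgrade convergence in probability on each level to almost-sure convergence. So in substance you are reproducing the argument behind the reference the paper cites; there is no methodological divergence to compare.

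One technical point deserves a flag. In the ``within-cell'' step you invoke that any Borel probability on a ``Polish-type'' space is the push-forward of Lebesgue measure by a Borel map. For genuinely Polish $M$ this is the Borel isomorphism theorem and is unproblematic. For the statement as written --- a merely \emph{separable} metric space, with no completeness assumed --- this realization step is more delicate (tightness can fail, and the homeomorphic image of $M$ in $[0,1]^{\mathbb N}$ need not be Borel). The classical proofs sidestep this either by building the coupling on a richer probability space than $([0,1],\mathrm{Leb})$, or by a recursive construction that never needs to realize a full conditional law in one shot. Your outline is correct in spirit, but if you want the result at the stated generality you should either cite the realization lemma precisely (e.g., Kallenberg's Lemma~2.22 or the analogous result in Dudley) or structure the construction so that this step is not required.
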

\begin{remk}\label{remk:corl_Skor_repr}
Let $((f_n, g_n))_{n\in\mn_0}$ be a sequence of stochastic processes in $D\times D$, whose paths a.s. satisfy the assumptions of Proposition \ref{prop:WhittConvCompositions}.
An appeal to Proposition \ref{thm:SkorokhodRepresentation} enables us to deduce the weak convergence $f_n\circ g_n\Rightarrow f_0\circ g_0$ on $D$ as $n\to\infty$.

Proposition \ref{thm:SkorokhodRepresentation} is applicable both in the aforementioned setting and in the other parts of the paper because all the function spaces appearing in the text (the spaces of continuous functions, monotone functions, bounded c\`{a}dl\`{a}g functions) are measurable subsets of $D$, see, for instance, p.~429 in \cite{Whitt:2002}.
\end{remk}

Let $(X(t))_{t\geq 0}$ be a time-homogeneous Markov process on $\mr$ with a family of transition probabilities $$P(t,x,A)= \mmp\{X(t)\in A \ | \ X(0)=x\}$$ for $t\geq 0$, $x\in \mr$ and Borel sets $A$ on $\mr$. Denote by $(P_t)_{t\geq 0}$ and $R_\lambda$ the semigroup and the resolvent of $X$ defined by
\[
P_tf(x)=\me_x f(X(t))=\int_\mr f(y)P(t,x,{\rm d}y),\quad t\geq 0
\]
and
\[
R_\lambda f(x)=\int_0^\infty \eee^{-\lambda t} P_tf(x){\rm d}t=\int_0^\infty \eee^{-\lambda t}\me _xf(X(t)){\rm d}t,\quad x\in\mr, \lambda>0
\]
for bounded continuous functions $f:\mr\to\mr$.

Let $C_0(\mr)$ be the Banach space of continuous functions on $\mr$ vanishing at $\pm\infty$ equipped with the supremum norm $\|f\|=\sup_{x\in\mr}|f(x)|$.
Recall that a Feller process is a strong Markov process which has a strongly continuous semigroup on $C_0(\mr)$ and possesses a c\`{a}dl\`{a}g modification. In the sequel, we tacitly assume that the paths of a Feller process itself are c\`{a}dl\`{a}g.

In the proof of Theorem \ref{thm:lim_skew_Levy_walk} we intend to approximate a Feller process taking values in $\mr$ by a sequence of continuous time Markov chains taking values in subsets of $\mr$. Furthermore, the subsets are different for different elements of the sequence. As a preparation, the phase spaces of $X^{(0)}$, $X^{(1)},\ldots$ in the following result are allowed to be different.
\begin{prop}\label{thm:weak_convergenceEK_chains}
Let $X^{(0)}$ be a Feller process on $\mr$ and, for each $n\in\mn$, $X^{(n)}$ a time-homogeneous Markov process on $G_n$, a subset of $\mr$, with paths in $D$, where $G_1$, $G_2,\ldots$ are possibly different, and transition probabilities $P^{(n)}(t,x,A)$ for $t\geq 0$, $x\in G_n$ and Borel subsets $A$ on $G_n$.
For each $n\in\mn_0$ and $\lambda>0$, denote by  $(P^{(n)}(t))_{t\geq 0}$ and $R^{(n)}_\lambda$ the semigroup and the resolvent of $X^{(n)}$.

Assume  that the random variables $X^{(n)}(0)$ converge in distribution to $X^{(0)}(0)$ as $n\to\infty$, and one of the following two conditions holds:

\noindent 1) for each $f\in C_0(\mr)$ and each $t\geq 0$,
\bel{eq:conv_semigroups_restr}
\lim_{n\to\infty}\sup_{x\in G_n}\,|P^{(n)}_t f(x)-P^{(0)}_t f(x)|=0;
\ee

\noindent 2) for each $f\in C_0(\mr)$ and each $\lambda>0$,
\bel{eq:conv_resolvents_restr}
\lim_{n\to\infty}\sup_{x\in G_n}\,|R^{(n)}_\lambda f(x)-R^{(0)}_\lambda f(x)|=0.
\ee

\noindent Then
\[
X^{(n)}~\Rightarrow~ X^{(0)},\quad n\to\infty
\]
on $D$.
\end{prop}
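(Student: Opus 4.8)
\textbf{Proof proposal for Proposition \ref{thm:weak_convergenceEK_chains}.}
The plan is to reduce the statement to a standard weak-convergence criterion for Markov processes via resolvent (or semigroup) convergence, of the type found in Ethier and Kurtz. The only genuine novelty over the textbook situation is that the approximating processes $X^{(n)}$ live on different state spaces $G_n\subseteq\mr$, so one cannot directly speak of a single Banach space on which all the semigroups act. I would handle this by first showing that conditions 1) and 2) are equivalent, and then reducing each to a statement purely about the limiting Feller semigroup on $C_0(\mr)$.

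First I would record the elementary equivalence of the two hypotheses. Given condition 1), for $f\in C_0(\mr)$ and $\lambda>0$ write $R^{(n)}_\lambda f(x)-R^{(0)}_\lambda f(x)=\int_0^\infty \eee^{-\lambda t}\big(P^{(n)}_t f(x)-P^{(0)}_t f(x)\big)\,{\rm d}t$, take the supremum over $x\in G_n$ inside the integral, and invoke dominated convergence using the uniform bound $\sup_{x\in G_n}|P^{(n)}_t f(x)-P^{(0)}_tf(x)|\le 2\|f\|$; this yields condition 2). For the converse I would use that a sequence of contraction semigroups with generators converging on a core converges, or more directly the Trotter--Kato type argument: resolvent convergence at one $\lambda$ self-improves to all $\lambda$ and then to semigroup convergence on compact time intervals. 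Since the hypothesis already asks for all $\lambda>0$, the passage back from $R^{(n)}$ to $P^{(n)}$ is the routine Trotter--Kato implication; I would cite Ethier--Kurtz (Chapter 1) for the quantitative version that $\sup_{x}|R^{(n)}_\lambda f(x)-R^{(0)}_\lambda f(x)|\to0$ for all $\lambda$ forces $\sup_x|P^{(n)}_tf(x)-P^{(0)}_tf(x)|\to0$ uniformly on bounded $t$-intervals.

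Next, with condition 1) in force, I would establish tightness of the laws of $X^{(n)}$ on $D$ together with convergence of finite-dimensional distributions, which together give $X^{(n)}\Rightarrow X^{(0)}$. For the finite-dimensional distributions: using the Markov property and induction on the number of time points, the joint law of $(X^{(n)}(t_1),\dots,X^{(n)}(t_k))$ is built from the initial law and iterated applications of $P^{(n)}_{t_{j+1}-t_j}$ to functions in $C_0(\mr)$ (or products thereof, which can be handled by a density argument since $C_0(\mr)^{\otimes k}$ is dense in $C_0(\mr^k)$); the uniform convergence in 1) together with the assumed convergence in distribution $X^{(n)}(0)\Rightarrow X^{(0)}(0)$ propagates through each step. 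The key point making this work despite the varying $G_n$ is that $f$ ranges over $C_0(\mr)$, a fixed space, and $P^{(n)}_tf$ is simply being compared to $P^{(0)}_tf$ pointwise on $G_n$; the extension of $P^{(n)}_tf$ off $G_n$ is irrelevant. For tightness on $D$, I would apply the Aldous or the Ethier--Kurtz criterion: it suffices to control $\me_x|g(X^{(n)}(t+h))-g(X^{(n)}(t))|$ uniformly, and this follows from $\|P^{(n)}_h g-g\|_{G_n}\le \|P^{(0)}_h g-g\|+2\sup_{x\in G_n}|P^{(n)}_h g(x)-P^{(0)}_hg(x)|$, where the first term tends to $0$ as $h\to0$ by strong continuity of the Feller semigroup and the second is small by 1); combined with the convergence of one-dimensional marginals (compact containment) this yields tightness.

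The step I expect to be the main obstacle is the finite-dimensional convergence handled cleanly in the presence of different state spaces $G_n$: one must be careful that the product test functions $f_1\otimes\cdots\otimes f_k$ restricted to $G_n^{\,k}$ still separate points well enough, and that the compact containment condition holds, i.e.\ that no mass of $X^{(n)}$ escapes to infinity. I would obtain compact containment from convergence of the one-dimensional marginals, $X^{(n)}(t)\Rightarrow X^{(0)}(t)$, which itself follows by applying 1) to a single $f\in C_0(\mr)$ and using $X^{(n)}(0)\Rightarrow X^{(0)}(0)$; since the limit laws are tight on $\mr$, so is the sequence, giving the required compactness uniformly in $n$ for each fixed $t$, and then uniformly over compact $t$-sets by the equicontinuity furnished by the semigroup bound above. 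Once tightness and fidi-convergence are in place, the conclusion $X^{(n)}\Rightarrow X^{(0)}$ on $D$ with the $J_1$-topology is immediate.
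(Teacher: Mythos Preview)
Your overall strategy is sound, but it inverts the paper's allocation of work in a way that leaves the genuinely delicate step under-argued. The paper's proof does two things: for condition~1) it simply cites Theorem~2.11 on p.~172 of Ethier--Kurtz (Chapter~4, not Chapter~1), which already handles Markov processes on varying state spaces $G_n$ via restriction maps $\pi_n:C_0(\mr)\to B(G_n)$; and for condition~2) it gives an \emph{explicit} Trotter-type computation adapted to the multi-space setting. The key identity they derive is
\[
R_\lambda(B)\big(\pi T_A(t)-T_B(t)\pi\big)R_\lambda(A)f=\int_0^t T_B(t-s)\big(\pi R_\lambda(A)-R_\lambda(B)\pi\big)T_A(s)f\,{\rm d}s,
\]
applied with $\pi=\pi_n$, which lets them pass from resolvent convergence to semigroup convergence first for $h\in{\rm Dom}\,((A^{(0)})^2)$ and then, by density, for all $h\in C_0(\mr)$.

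Your proposal instead cites Trotter--Kato/Ethier--Kurtz Chapter~1 for the implication 2)$\Rightarrow$1) and then argues 1)$\Rightarrow$weak convergence from first principles via tightness and finite-dimensional distributions. The first citation is the gap: Chapter~1 of Ethier--Kurtz treats semigroups on a single Banach space, and the whole point here---which you yourself flag as the novelty---is that $P^{(n)}$ acts on $B(G_n)$ while $P^{(0)}$ acts on $C_0(\mr)$. A direct citation does not cover this; one needs either the Chapter~4 machinery with the $\pi_n$ maps or the explicit calculation the paper performs. Your second part (tightness plus fidi) is essentially reproducing the content of the Ethier--Kurtz theorem the paper invokes; it can be made to work, but your tightness bound $\|P^{(n)}_hg-g\|_{G_n}\le\|P^{(0)}_hg-g\|+\sup_{x\in G_n}|P^{(n)}_hg(x)-P^{(0)}_hg(x)|$ uses condition~1) only at a fixed $h$, whereas the Aldous-type criterion needs smallness uniform over $h\in[0,\delta]$ and large $n$ simultaneously, which requires an extra equicontinuity argument you have not supplied.

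In short: the paper puts the real work into 2)$\Rightarrow$1) and cites for 1)$\Rightarrow$weak convergence; you do the opposite, and the step you wave at is precisely the one that needs the multi-space adaptation.
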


The proof of Proposition \ref{thm:weak_convergenceEK_chains} will be given in the Appendix.

Let
\[
%\sigma=
\sigma_{x^*}:=\inf\{ t\geq 0\ :\ X(t)={x^*}\}
\]
be the first hitting time of $ x^*$ and $V_\lambda^X$ the resolvent of $X$ killed at $x^*$ which is defined by
\[
V_\lambda f(x)= V_\lambda^X f(x):=\me_x \int_0^{\sigma_{x^*}}\eee^{-\lambda t} f(X(t)) {\rm d}t,\quad x\in\mr.
\]
The next result provides a useful representation of the resolvent of $X$.
\begin{lem}\label{lem:4_resolvent_repr_sigma}
For any strong Markov process $X$,
\bel{eq:resolvent_general}
R_\lambda f(x)=  V_\lambda f(x)+\me_x \eee^{-\lambda \sigma_{x^*}} R_\lambda f(x^*),\quad x\in\mr.
\ee
\end{lem}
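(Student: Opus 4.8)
The plan is to prove the resolvent decomposition \eqref{eq:resolvent_general} by splitting the time integral defining $R_\lambda f(x)$ at the first hitting time $\sigma_{x^*}$ and then invoking the strong Markov property. Concretely, I would start from the definition
\[
R_\lambda f(x)=\me_x\int_0^\infty \eee^{-\lambda t} f(X(t))\,{\rm d}t=\me_x\int_0^{\sigma_{x^*}}\eee^{-\lambda t}f(X(t))\,{\rm d}t+\me_x\int_{\sigma_{x^*}}^\infty \eee^{-\lambda t}f(X(t))\,{\rm d}t,
\]
where the second integral is understood to vanish on the event $\{\sigma_{x^*}=\infty\}$. The first term on the right-hand side is exactly $V_\lambda f(x)$ by definition of the resolvent of $X$ killed at $x^*$, so it remains to identify the second term with $\me_x\eee^{-\lambda\sigma_{x^*}}R_\lambda f(x^*)$.

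For the second term, the idea is to change variables $t=\sigma_{x^*}+s$ inside the integral, which gives
\[
\me_x\Big(\1_{\{\sigma_{x^*}<\infty\}}\,\eee^{-\lambda\sigma_{x^*}}\int_0^\infty \eee^{-\lambda s} f\big(X(\sigma_{x^*}+s)\big)\,{\rm d}s\Big).
\]
On $\{\sigma_{x^*}<\infty\}$ we have $X(\sigma_{x^*})=x^*$ by right-continuity of paths, so the inner integral is a functional of the post-$\sigma_{x^*}$ process started at $x^*$. Applying the strong Markov property at the stopping time $\sigma_{x^*}$ — conditioning on $\mathcal{F}_{\sigma_{x^*}}$ and using that $\eee^{-\lambda\sigma_{x^*}}\1_{\{\sigma_{x^*}<\infty\}}$ is $\mathcal{F}_{\sigma_{x^*}}$-measurable — the conditional expectation of the inner integral equals $\me_{X(\sigma_{x^*})}\int_0^\infty \eee^{-\lambda s}f(X(s))\,{\rm d}s=R_\lambda f(x^*)$. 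Taking expectations yields $\me_x\eee^{-\lambda\sigma_{x^*}}R_\lambda f(x^*)$, which completes the computation; note that on $\{\sigma_{x^*}=\infty\}$ the factor $\eee^{-\lambda\sigma_{x^*}}$ is $0$, consistent with the convention that the contribution vanishes there.

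The main technical point to handle with care is the application of the strong Markov property to the stopping time $\sigma_{x^*}$, which requires that $X$ is indeed strongly Markov (this is exactly the hypothesis of the lemma) and that $\sigma_{x^*}$, being a first hitting time of a closed set for a c\`adl\`ag process, is a legitimate stopping time with respect to the (suitably completed) natural filtration. A secondary point is to justify interchanging $\me_x$ with the time integrals (Fubini–Tonelli, valid since $f$ is bounded and $\eee^{-\lambda t}$ is integrable over $[0,\infty)$) and to make sure the two pieces add up without double-counting the boundary point $t=\sigma_{x^*}$, which has Lebesgue measure zero and hence contributes nothing. I expect none of these steps to be genuinely hard; the only real subtlety is bookkeeping on the event $\{\sigma_{x^*}=\infty\}$, where both the stated formula and the direct splitting consistently give $V_\lambda f(x)$ as the whole contribution.
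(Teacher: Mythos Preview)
Your argument is correct and is precisely the standard derivation of this identity: split the defining integral at $\sigma_{x^*}$, change variables, and apply the strong Markov property at $\sigma_{x^*}$. The paper does not give a proof at all; it simply cites the formula as a standard fact (formula (1.2) on p.~133 in \cite{Blumenthal:1992}), so your proposal supplies exactly the details the paper omits.
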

\begin{proof}
 This is a standard fact, see, for instance, formula (1.2) on p.~133 in \cite{Blumenthal:1992}.
%The claim follows from the strong Markov property and the following calculation: for $x\in\mr$,
%\begin{multline*}
%R_\lambda f(x)=\me_x \Big(\int_0^{\sigma_{x^*}}+\int_{\sigma_{x^*}}^\infty\Big) \eee^{-\lambda t} f(X(t)){\rm d}t= V_\lambda f(x)+\me_x \eee^{-\lambda \sigma_{x^*}} \int_0^\infty \eee^{-\lambda t} f(X(t+\sigma_{x^*} )){\rm d}t\\ = V_\lambda f(x)+\me_x \eee^{-\lambda \sigma_{x^*} } R_\lambda f(x^*).
%\end{multline*}
\end{proof}

Let $a>0$ be a fixed parameter, $\tau$ a random variable with $\mmp\{\tau=0\}=0$, and $X$ a Feller process that visits $0$ a.s.\ for any starting point $x\in\mr$. Construct a {\it holding and jumping process} $X_{\tau,\, a}$ as follows. The process starts at $x$ and behaves like $X$ until the first visit to $0$. Then it spends at $0$ a random period of time having an exponential distribution of mean $1/a$. Afterwards, it makes a jump, whose size has the same distribution as $\tau$, and then behaves like $X$ until the next visit to $0$. The evolution just described then iterates, and all the excursions are independent (an excursion is a path between two successive visits to $0$). The so constructed process $X_{\tau,\, a}$ is strong Markov.

Given next is the result that can be found in formula (2.2) on p.~137 in \cite{Blumenthal:1992}.
\begin{lem}\label{lem:jumphold}
Let $R_\lambda^{\tau,\, a}$ be the resolvent  of holding and jumping process $X_{\tau,\, a}$. Then, for $\lambda>0$,
\bel{eq:4ResplventHoldJump}
\lambda R_\lambda^{\tau,\, a} f(0)=
\frac{a^{-1} f(0)+\me [(V_\lambda f) (\tau)]}{a^{-1}+\lambda^{-1}\me_\tau(1-\eee^{-\lambda \sigma})}=
\frac{a^{-1}f(0)+\me [(V_\lambda f) (\tau)]}{a^{-1}+\me [(V_\lambda 1) (\tau)]},
\ee
where $V_\lambda$ is the resolvent of $X_{\tau,\, a}$ killed at $0$ and $\sigma$ is the first hitting time of $0$ by $X_{\tau,\, a}$.
\end{lem}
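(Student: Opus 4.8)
The plan is to exploit the regenerative (renewal) structure of the holding and jumping process at $0$. Started from $0$, a path of $X_{\tau,\,a}$ splits into i.i.d.\ cycles: the $k$th cycle consists of a holding interval at $0$ of length $H_k$, where $H_k$ has the $\Exp(a)$ distribution, followed by an excursion of length $\sigma^{(k)}$, which is a copy of $X$ started from an independent random point $\tau_k$ with the distribution of $\tau$ and run until its first hitting of $0$. By the very construction of $X_{\tau,\,a}$ the pairs $(H_k,\,X^{(k)})_{k\in\mn}$ are mutually independent, where $X^{(k)}$ denotes the $k$th excursion path; put $T_k:=H_k+\sigma^{(k)}$, $\Theta_0:=0$ and $\Theta_k:=T_1+\dots+T_k$ for $k\in\mn$. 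Since $H_k>0$ a.s., $\sum_k H_k=\infty$ a.s., so $\Theta_k\to\infty$ and $X_{\tau,\,a}$ does not explode; and since $X$ visits $0$ a.s.\ from every starting point, $\sigma^{(k)}<\infty$ a.s. Consequently $q:=\me\,\eee^{-\lambda T_1}<1$ for every $\lambda>0$.

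First I would decompose the resolvent along the regeneration times $\Theta_k$. Since $\int_0^\infty\eee^{-\lambda t}|f(X_{\tau,\,a}(t))|\,{\rm d}t\le\|f\|/\lambda$, Fubini's theorem lets me write
\[
R_\lambda^{\tau,\,a}f(0)=\sum_{k\in\mn}\me_0\Big[\int_{\Theta_{k-1}}^{\Theta_{k-1}+H_k}\eee^{-\lambda t}f(X_{\tau,\,a}(t))\,{\rm d}t\Big]+\sum_{k\in\mn}\me_0\Big[\int_{\Theta_{k-1}+H_k}^{\Theta_k}\eee^{-\lambda t}f(X_{\tau,\,a}(t))\,{\rm d}t\Big].
\]
On $[\Theta_{k-1},\,\Theta_{k-1}+H_k)$ the process sits at $0$ and on $[\Theta_{k-1}+H_k,\,\Theta_k)$ it follows the excursion $X^{(k)}$; using that $\Theta_{k-1}$ is independent of $(H_k,\,X^{(k)})$, that $\me\,\eee^{-\lambda\Theta_{k-1}}=q^{k-1}$, the memorylessness of $H_k$ (so $\me[1-\eee^{-\lambda H_k}]=\lambda/(a+\lambda)$ and $\me\,\eee^{-\lambda H_k}=a/(a+\lambda)$), and, after conditioning on $\tau_k$, $\me\big[\int_0^{\sigma^{(k)}}\eee^{-\lambda s}f(X^{(k)}(s))\,{\rm d}s\big]=\me[(V_\lambda f)(\tau)]$ (here $V_\lambda$ is the resolvent of $X_{\tau,\,a}$ killed at $0$, which agrees on $\mr\setminus\{0\}$ with the resolvent of $X$ killed at $0$, and $\tau\ne0$ a.s.), the $k$th pair of terms equals
\[
q^{k-1}\Big(\frac{f(0)}{a+\lambda}+\frac{a}{a+\lambda}\,\me[(V_\lambda f)(\tau)]\Big).
\]

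Summing the geometric series and writing $p:=\me_\tau\,\eee^{-\lambda\sigma}$, so that $q=\frac{a}{a+\lambda}\,p$ and $1-q=\frac{a+\lambda-ap}{a+\lambda}$, I obtain
\[
\lambda R_\lambda^{\tau,\,a}f(0)=\frac{\lambda}{1-q}\cdot\frac{f(0)+a\,\me[(V_\lambda f)(\tau)]}{a+\lambda}=\frac{\lambda f(0)+\lambda a\,\me[(V_\lambda f)(\tau)]}{\lambda+a(1-p)}=\frac{a^{-1}f(0)+\me[(V_\lambda f)(\tau)]}{a^{-1}+\lambda^{-1}\me_\tau(1-\eee^{-\lambda\sigma})},
\]
which is the first asserted equality. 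The second equality then follows from $V_\lambda 1(y)=\me_y\int_0^{\sigma}\eee^{-\lambda s}\,{\rm d}s=\lambda^{-1}(1-\me_y\,\eee^{-\lambda\sigma})$ for $y\ne0$, whence $\me[(V_\lambda 1)(\tau)]=\lambda^{-1}\me_\tau(1-\eee^{-\lambda\sigma})$.

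The hard part will be the rigorous justification of the cycle decomposition and of the factorization of the $k$th term: one has to check that each $\Theta_{k-1}$ is a stopping time, that $X_{\tau,\,a}$ started afresh at time $\Theta_{k-1}$ (from the state $0$) is an independent copy of $X_{\tau,\,a}$ started from $0$ — this is where the explicit construction of $X_{\tau,\,a}$ as a concatenation of independent holding-plus-excursion blocks, together with its strong Markov property, enters — and that the first excursion contributes $\me[(V_\lambda f)(\tau)]$ after conditioning on its starting point $\tau_1$. The remaining ingredients (Fubini, the Laplace transform of an exponential law, summation of a geometric series) are routine. Alternatively, the statement may simply be quoted from formula (2.2) on p.~137 of \cite{Blumenthal:1992}.
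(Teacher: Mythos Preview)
Your proposal is correct; the regenerative-cycle decomposition you outline is the standard derivation of this formula, and you have also identified the exact reference the paper invokes. The paper itself offers no argument beyond citing formula (2.2) on p.~137 of \cite{Blumenthal:1992}, so your sketch in fact supplies more than the paper does.
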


Observe that the resolvent of $X_{\tau,\, a}$ can be calculated with the help of Lemma \ref{lem:4_resolvent_repr_sigma} (with $x^\ast=0$) and Lemma \ref{lem:jumphold}.

\section{Proofs}

\subsection{Proof of Theorem \ref{thm:lim_skew_Levy_walk}(a)}
We find it useful to Poissonize, for each $v>0$, the process $X_v:=(X_v(t))_{t\geq 0}$ defined by $X_v(t):=X_v(\lfloor vt\rfloor)/a(v)$ for $t\geq 0$. To this end, let $(N(t))_{t\geq 0}$ denote a Poisson process on $[0,\infty)$ of unit intensity, which is independent of $((X_v(k))_{k\in\mn_0})_{v>0}$. For each $v>0$, define now $\wt X_v:=(\wt X_v(t))_{t\geq 0}$, a Poissonized version of $X_v$, by $$\wt X_v(t):=\frac{ X_v(N(vt))}{a(v)},\quad t\geq 0.$$ The Poissonized version $\wt X_v$ is a continuous-time Markov chain. The sizes of its jumps are the same as those of $X_v$, but unlike in $X_v$ the jumps occur at random epochs given by the successive positions of a standard random walk with exponentially distributed increments of mean $1/v$. The process $\wt X_v$ is an instance of the holding and jumping process discussed in the paragraph preceding Lemma \ref{lem:jumphold}. The main reason behind using the Poissonization in the present setting is availability of formula \eqref{eq:4ResplventHoldJump}.

For each $T>0$,
\begin{equation}\label{eq:pois}
\lim_{v\to\infty}\sup_{t\in [0,\,T]}|v^{-1}N(vt)-t|=0\quad \text{a.s.}
\end{equation}
Since the limit function is non-random, continuous and increasing, % we infer with the help of
Proposition \ref{prop:WhittConvCompositions} and Remark \ref{remk:corl_Skor_repr} tell us that the weak limits of $X_v$ and $\wt X_v$ are the same, provided these exist. In particular, it is enough to prove that
\begin{equation}\label{eq:inter2}
\wt X_v~\Rightarrow~U_{\alpha,\,\beta},\quad v\to\infty
\end{equation}
on $D$. For later use, we note that, according to Proposition \ref{prop:WhittConvCompositions} and Remark \ref{remk:corl_Skor_repr}, relations \eqref{eq:convStable1823} and \eqref{eq:pois} entail
\begin{equation}\label{eq:joint}
\wt S_v~\Rightarrow~ U_\alpha,\quad v\to\infty
\end{equation}
on $D$, where $\wt S_v:=(S_\xi(N(vt))/a(v))_{t\geq 0}$.

We intend to prove \eqref{eq:inter2} with the help of Proposition \ref{thm:weak_convergenceEK_chains}. Since $U_{\alpha,\,\beta}$ and $\wt X_v$, $v>0$ are strong Markov processes, invoking Lemma \ref{lem:4_resolvent_repr_sigma} (with $x^\ast=0$) yields, for $\lambda>0$,
$$R_\lambda^{U_{\alpha,\,\beta}} f(x)=  V_\lambda^{U_{\alpha,\,\beta}} f(x)+\me_x \eee^{-\lambda \sigma(U_{\alpha,\,\beta})} R_\lambda^{U_{\alpha,\,\beta}}f(0),\quad x\in\mr$$ and
$$R_\lambda^{\wt X_v} f(l/a(v))=  V_\lambda^{\wt X_v} f(l/a(v))+\me_{l/a(v)} \eee^{-\lambda \sigma(\wt X_v)} R_\lambda^{\wt X_v} f(0),\quad l\in\mathbb{Z}.$$
 By Proposition \ref{thm:weak_convergenceEK_chains}, \eqref{eq:inter2} follows if we can show that, for each $f\in C_0(\mr)$ and $\lambda>0$,
\bel{eq:23_107}
\lim_{v\to\infty} \sup_{l\in\mbZ} | V^{\wt X_v}_{\lambda}f(l/a(v))-V^{U_{\alpha,\,\beta}}_\lambda
f(l/a(v))|=0,
\ee
\begin{equation}
\lim_{v\to\infty} \sup_{l\in\mbZ}\,\big|\me_{l/a(v)}\eee^{-\lambda \sigma ({\wt X_v}) }-\me_{l/a(v)} \eee^{-\lambda \sigma (U_{\alpha,\, \beta})}\big|=0 \label{eq:23_111}
\end{equation}
and
\bel{eq:conv_res_0_skewLevy_seq}
\lim_{v\to\infty}  \big| R^{\wt X_v}_{\lambda }f( 0)-R^{U_{\alpha,\beta}}_\lambda f(0)\big|=0.
\ee

Observe that, for each $v>0$, the conditional distribution of $(X_v(k)\1_{\{k\leq \sigma(X_v)\}})_{k\in\mn_0}$ given $X_v(0)=x$ is the same as the conditional distribution of $(S_\xi(k)\1_{\{k\leq \sigma(S_\xi)\}})_{k\in\mn_0}$ given $S_\xi(0)=x$. This implies that
\begin{equation}\label{eq:resol}
V^{\wt X_v}_\lambda= V^{\wt S_v}_\lambda,\quad \lambda>0
\end{equation}
and, for each $l\in\mn$, $\me_{l/a(v)}\eee^{-\lambda \sigma(\wt X_v)}=\me_{l/a(v)}\eee^{-\lambda\sigma(\wt S_v)}=\me_l \eee^{-(\lambda/v)\sigma(S_\xi\circ N)}$, $\lambda\geq 0$. Here, the last equality follows by a direct computation. Also, the conditional distribution of $(U_{\alpha,\,\beta}(t)\1_{\{t<\sigma(U_{\alpha,\,\beta})\}})_{t\geq 0}$ given $U_{\alpha,\,\beta}(0)=x$ is the same as the conditional distribution of $(U_\alpha (t)\1_{\{t<\sigma(U_\alpha)\}})_{t\geq 0}$ given $U_\alpha(0)=x$. This entails $V^{U_{\alpha,\,\beta}}_\lambda= V^{U_\alpha}_\lambda$, $\lambda>0$ and, for each $x\in\mr$, $\me_x\eee^{-\lambda\sigma(U_{\alpha,\,\beta})}=\me_x\eee^{-\lambda\sigma(U_\alpha)}$, $\lambda\geq 0$. As a consequence,
\eqref{eq:23_107} and \eqref{eq:23_111} are equivalent to
\bel{eq:23_1071}
\lim_{v\to\infty} \sup_{l\in\mbZ} | V^{\wt S_v}_{\lambda}f(l/a(v))
-V^{U_{\alpha }}_\lambda
f(l/a(v))|=0
\ee
and
\begin{multline}\label{eq:conv_hitting_times_Levy_walks}
\lim_{v\to\infty} \sup_{l\in\mbZ} \big|\me_{l/a(v)} \eee^{-\lambda \sigma (\wt S_v)} -\me_{l/a(v)}\eee^{-\lambda \sigma (U_\alpha)}\big|\\=\lim_{v\to\infty} \sup_{l\in\mbZ}\, \big|\me_l \eee^{-(\lambda/v) \sigma (S_\xi\circ N)}- \me_{l/a(v)}  \eee^{-\lambda \sigma (U_\alpha)}\big|=0.
\end{multline}

Another application of Lemma \ref{lem:4_resolvent_repr_sigma} (with $x^\ast=0$) to strong Markov processes $U_\alpha$ and $\wt S_v$, $v>0$ enables us to conclude that, for $\lambda>0$,
$$R_\lambda^{U_\alpha} f(x)=  V_\lambda^{U_\alpha} f(x)+\me_x \eee^{-\lambda \sigma(U_\alpha)} R_\lambda^{U_\alpha}f(0),\quad x\in\mr$$ and
$$R_\lambda^{\wt S_v} f(l/a(v))=  V_\lambda^{\wt S_v} f(l/a(v))+\me_{l/a(v)} \eee^{-\lambda \sigma(\wt S_v)} R_\lambda^{\wt S_v} f(0),\quad l\in\mathbb{Z}.$$ Thus, if we can prove \eqref{eq:conv_hitting_times_Levy_walks}
and
\begin{multline}\label{eq:conv_resolvents_stable_walks}
\lim_{v\to\infty}  \sup_{x\in \mbR}\, |R_\lambda^{\wt S_v} f(x )- R_\lambda^{U_\alpha}f(x)|=\\
\lim_{v\to\infty}  \sup_{x\in \mbR}\,\Big|\int_0^\infty
\Big(\me f(x+ S_\xi(N(vt))/a(v))- \me f(x+U_\alpha(t) )\Big) \eee^{-\lambda t}{\rm d}t\Big|=0
\end{multline}
for $f\in C_0(\mbR)$, then \eqref{eq:23_1071} holds. Once this is done, the only remaining thing is to check \eqref{eq:conv_res_0_skewLevy_seq}.

\noindent {\sc Proof of \eqref{eq:conv_resolvents_stable_walks}}. Note that each $f\in C_0(\mr)$ is uniformly continuous and put, for $\gamma>0$, $\omega_f(\gamma):=\sup_{x,y\in\mr,\, |x-y|\leq \gamma}\,|f(x)-f(y)|$. Let $(v_k)_{k\in\mn}$ be any sequence of positive numbers satisfying $\lim_{k\to\infty} v_k=\infty$. Using \eqref{eq:joint} together with the Skorokhod representation theorem (Proposition \ref{thm:SkorokhodRepresentation}) we conclude that there exist $(\hat S_{v_k})_{k\in\mn}$, versions of $(\wt S_{v_k})_{k\in\mn}$, and $\hat U_\alpha$, a version of $U_\alpha$,
such that
\[
\lim_{k\to\infty}\hat S_{v_k}(t)=\hat U_\alpha(t)\quad\text{a.s.}
\]
on $D.$ In particular, this entails the a.s.\ convergence for almost all $t\geq 0$ with respect to Lebesgue measure.
Hence,
\begin{multline*}
\lim_{k\to\infty}  \sup_{x\in \mbR}\,\int_0^\infty
\big|\me f(x+S_\xi(N(v_kt))/a(v_k))- \me f(x+U_\alpha(t) )\big| \eee^{-\lambda t}{\rm d}t\\=
\lim_{k\to\infty}  \sup_{x\in \mbR}\,\int_0^\infty
 \me \big|f(x+ \hat S_{v_k}(t))- f(x+\hat U_\alpha(t))\big| \eee^{-\lambda t} {\rm d}t\\\leq
\lim_{k\to\infty} \int_0^\infty \me \big[\big(\omega_f\big(\hat S_{v_k}(t)- \hat U_\alpha(t)\big)\big)\wedge (2\|f\|)\big]  \eee^{-\lambda t}{\rm d}t=0,
\end{multline*}
where the last equality is justified by the Lebesgue dominated convergence theorem. Since the diverging sequence $(v_k)_{k\in\mn}$ is arbitrary, % this completes
the proof of \eqref{eq:conv_resolvents_stable_walks} is complete.

It follows from Corollary 18 on p.~64 in \cite{Bertoin:1996} that
\bel{eq:Laplica_hitting_Levy}
\me_x \eee^{-\lambda \sigma(U_\alpha)}= \frac{v_\lambda(-x)}{v_\lambda(0)},\quad x\in\mathbb{R}, \ \lambda>0,
\ee
where $v_\lambda(x,y)=v_\lambda(y-x)$, $x,y\in\mathbb{R}$  is the density of the resolvent kernel of $U_\alpha$. It is known that
\bel{eq:resolvent_stable}
v_\lambda(x)=\frac{1}{ \pi} \int_0^\infty\frac{\cos(x\theta)}{\lambda+\theta^\alpha} {\rm d}\theta,\quad x\in\mathbb{R}.
\ee
According to the last cited result, formula \eqref{eq:Laplica_hitting_Levy} is valid for any L{\'e}vy process, whose resolvent kernel is absolutely continuous with a bounded density.

To prove \eqref{eq:conv_hitting_times_Levy_walks} we first derive in Corollary \ref{corl:formula_generating_hitting} a formula for $\me_l \eee^{-\lambda \sigma (S_\xi\circ N)}$. As a preparation, we start with an auxiliary result.
\begin{lem}\label{lem:formula_generating_hitting}
Let $Y:=(Y(k))_{k\in\mn_0}$ be a Markov chain on a finite or countable set $G$. For $x^\ast\in G$, put $\sigma:=\sigma_{x^*}:=\inf\{k\in\mn_0\ :\ Y(k)=x^*\}$.
Then
\bel{eq:formula_hitting_generating}
\me_x s^\sigma=\frac{u_s(x,x^*)}{u_s(x^*,x^*)},\quad x\in G, \ |s|<1,
\ee
where $u_s(x,x^*)=\sum_{k\geq 0} s^k \Pb\{Y(k)=x^*\ | \ Y(0)=x\}$ for $x\in G$.

\noindent In particular, if $x^*=0$, $Y(k)=S_\tau (k)$ for $k\in\mn_0$ and $(Y(k))_{k\in\mn_0}$ lives on the lattice $G=a\mbZ$ for some $a>0$, then
\bel{eq:generating_hitting}
\me_x s^\sigma =\frac{u_s(-x)}{u_s(0)},\quad x\in a\mathbb{Z}, \ |s|<1,
\ee
where
\[
u_s(x)=u_s(x,0)= \sum_{k\geq 0}s^k \mmp\{S_\tau(k)=0\ | \ S_\tau(0)=x\}=\1_{\{0\}}(x)+\sum_{k\geq 1} s^k \Pb\{S_\tau (k)=-x\}.
\]
Alternatively,
\begin{equation}\label{eq:formula}
u_s(-x)=\frac{1}{2\pi}\int_{-\pi}^\pi \frac{\eee^{{\rm i}x\theta}}{1-s \me \eee^{{\rm i}\theta \tau}}{\rm d}\theta,\quad x\in a\mbZ, \ |s|<1.
\end{equation}
\end{lem}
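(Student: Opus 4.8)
The plan is to prove the three displayed formulas in turn, the workhorse being the elementary first‑entrance (renewal) decomposition of the chain $Y$, followed by a Fourier inversion for the lattice case. First I would establish the general identity \eqref{eq:formula_hitting_generating}. Write $p_k(x,y):=\mmp\{Y(k)=y\mid Y(0)=x\}$, so that $u_s(x,y)=\sum_{k\ge0}s^k p_k(x,y)$. On the event $\{Y(k)=x^\ast\}$ one has $0\le\sigma\le k$, and conditioning on $\{\sigma=j\}$ (which is contained in $\{Y(j)=x^\ast\}$) and applying the ordinary Markov property at the deterministic time $j$ produces the first‑entrance decomposition $p_k(x,x^\ast)=\sum_{j=0}^{k}\mmp_x\{\sigma=j\}\,p_{k-j}(x^\ast,x^\ast)$, $k\in\mn_0$. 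Multiplying by $s^k$ and summing over $k\ge0$ turns the right‑hand side into a Cauchy product of two series, both absolutely convergent because all coefficients lie in $[0,1]$ and $|s|<1$; rearranging gives $u_s(x,x^\ast)=\me_x[s^{\sigma}]\,u_s(x^\ast,x^\ast)$, where $\me_x[s^{\sigma}]=\sum_{j\ge0}s^j\mmp_x\{\sigma=j\}$ with the convention $s^{\infty}:=0$. Dividing by $u_s(x^\ast,x^\ast)$ yields \eqref{eq:formula_hitting_generating}, once that quantity is known to be nonzero.

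The non‑vanishing of $u_s(x^\ast,x^\ast)$ for complex $s$ is the one step that is not purely mechanical, and I would get it from the companion first‑return decomposition. Put $\sigma^{+}:=\inf\{k\ge1:Y(k)=x^\ast\}$ and $F(s):=\sum_{j\ge1}s^{j}\mmp_{x^\ast}\{\sigma^{+}=j\}$; decomposing $p_k(x^\ast,x^\ast)$ for $k\ge1$ according to $\sigma^{+}$ and summing exactly as above gives the renewal equation $u_s(x^\ast,x^\ast)=1+F(s)\,u_s(x^\ast,x^\ast)$, i.e. $(1-F(s))\,u_s(x^\ast,x^\ast)=1$. Since $|F(s)|\le\sum_{j\ge1}|s|^{j}\mmp_{x^\ast}\{\sigma^{+}=j\}\le|s|<1$, we have $1-F(s)\neq0$, hence $u_s(x^\ast,x^\ast)=(1-F(s))^{-1}\neq0$, and \eqref{eq:formula_hitting_generating} is established.

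The lattice formula \eqref{eq:generating_hitting} is then immediate: with $x^\ast=0$ and $Y(k)=S_\tau(k)$, spatial homogeneity of the random walk gives $p_k(x,0)=\mmp\{S_\tau(k)=-x\}$, so $u_s(x,0)$ depends on $x$ alone and equals the function $u_s(\cdot)$ of the statement, and \eqref{eq:formula_hitting_generating} specializes to \eqref{eq:generating_hitting}. For \eqref{eq:formula} I would invoke Fourier inversion on the lattice, $\mmp\{S_\tau(k)=x\}=\frac1{2\pi}\int_{-\pi}^{\pi}\eee^{-{\rm i}x\theta}(\me\,\eee^{{\rm i}\theta\tau})^{k}\,{\rm d}\theta$ for each $k\in\mn_0$ (stated for span $1$, as in the application; a general span $a>0$ reduces to this by rescaling), then multiply by $s^{k}$ and sum: the bound $|s\,\me\,\eee^{{\rm i}\theta\tau}|\le|s|<1$ holds uniformly in $\theta$, so $\sum_{k\ge0}(s\,\me\,\eee^{{\rm i}\theta\tau})^{k}=(1-s\,\me\,\eee^{{\rm i}\theta\tau})^{-1}$ converges uniformly on $[-\pi,\pi]$ and may be integrated term by term; a change of variable $\theta\mapsto-\theta$ then gives \eqref{eq:formula}. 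Beyond the non‑vanishing in the previous paragraph, the only care required is the bookkeeping at $\{\sigma=\infty\}$ and the absolute convergence that legitimizes the two series rearrangements, both immediate consequences of $|s|<1$.
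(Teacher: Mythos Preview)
Your argument is correct and follows essentially the same route as the paper: the paper derives \eqref{eq:formula_hitting_generating} by specializing the discrete resolvent identity $R_s f(x)=\sum_{k=0}^{\sigma-1}s^k\me_x f(Y(k))+\me_x s^\sigma R_s f(x^\ast)$ to $f=\1_{\{x^\ast\}}$, which is exactly your first-entrance decomposition written at the generating-function level, and it obtains \eqref{eq:formula} by computing $\sum_{x}u_s(-x)\eee^{{\rm i}x\theta}$ via Fubini and inverting, the same computation you perform in the opposite order. Your explicit check that $u_s(x^\ast,x^\ast)\neq 0$ via the first-return equation is a detail the paper leaves implicit.
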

\begin{proof}
Denote by $R_s$ the resolvent of $Y$, so that $$R_s f (x)= \sum_{k\geq 0} s^k \me_x f(Y(k)),\quad x\in G, \ |s|<1$$ for bounded measurable functions $f:G\to\mr$. The $R_s$ satisfies a formula similar to \eqref{eq:resolvent_general}
\[
R_s f(x) =\sum_{k=0}^{\sigma-1} s^k \me_x f(Y(k))+ \me_x s^\sigma R_s f(x^\ast),\quad x\in G, \ |s|<1.
\]
Put $f(x)=\1_{\{x^\ast\}}(x)$. Then $R_s f(x)=u_s(x,x^\ast)$, $x\in G$, $|s|<1$, whereas the first summand on the right-hand side vanishes. This proves \eqref{eq:formula_hitting_generating}. Formula \eqref{eq:generating_hitting} is just a specialization of \eqref{eq:formula_hitting_generating}. To prove \eqref{eq:formula}, write with the help of Fubini's theorem
\begin{multline*}
\sum_{x\in a\mbZ} u_s(-x)\eee^{{\rm i}x\theta}=1+\sum_{x\in a\mbZ} \sum_{k\geq 1} s^k \Pb\{S_\tau(k)=x\}\eee^{{\rm i}x\theta}=1+\sum_{k\geq 1}s^k
\sum_{x\in a\mbZ}\Pb\{S_\tau(k)=x\}\eee^{{\rm i}x\theta}\\= 1+\sum_{k\geq 1}s^k (\me \eee^{{\rm i}\theta \tau})^k
= \frac{1}{1-s \me \eee^{{\rm i}\theta \tau}},\quad |s|<1,\ \theta\in\mbR.
\end{multline*}
With this at hand, \eqref{eq:formula} is an immediate consequence of a standard inversion formula.
\end{proof}
We stress that a continuous-time formula \eqref{eq:Laplica_hitting_Levy} rests on non-trivial potential-analytic results, whereas a discrete-time formula \eqref{eq:formula_hitting_generating} is rather simple.
\begin{corl}\label{corl:formula_generating_hitting}
Let the assumptions and notation of Lemma \ref{lem:formula_generating_hitting} be in force. Denote by $(N_\rho(t))_{t\geq 0}$ a Poisson process on $[0,\infty)$ of intensity $\rho>0$, which is independent of $Y$, and put $\wt Y(t)= Y(N_\rho(t))$ for $t\geq 0$. For $x^\ast\in G$, put $\wt \sigma:=\wt \sigma_{x^\ast}:=\inf\{t\geq 0\ :\ \wt Y(t)=x^\ast\}.$
Then
\bel{eq:formula_hitting_generating1}
\me_x \eee^{-\lambda\wt\sigma} =\frac{\hat u_\lambda(x, x^\ast)}{\hat u_\lambda(x^\ast,x^\ast)}=\frac{u_s(x,x^*)}{u_s(x^*,x^*)},\quad x\in G, \ \lambda>0,
\ee
where $s=\rho/(\lambda+\rho)$ and $\hat u_\lambda(x,x^\ast)=\int_0^\infty \eee^{-\lambda t} \Pb\{\wt Y(t)=x^\ast\ | \ \wt Y(0)=x\}{\rm d}t$.

\noindent In particular, if $x^*=0$, $Y(k)=S_\tau(k)$ for $k\in\mn_0$ and $(Y(k))_{k\in\mn_0}$ lives on the lattice $G=a\mbZ$ for some $a>0$,
then
\bel{eq:generating_hitting1}
\me_x \eee^{-\lambda\wt\sigma} =\frac{\hat u_\lambda(-x)}{\hat u_\lambda (0)}=\frac{u_s(-x)}{u_s (0)},\quad x\in a\mbZ, \ \lambda>0,
\ee
where $s=\rho/(\lambda+\rho)$ and
\begin{multline*}
\hat u_\lambda(x)=\hat u_\lambda(x,0)= \int_0^\infty \eee^{-\lambda t}\Pb\{\wt Y(t)=0\ | \ \wt Y(0)=x){\rm d}t\\=\int_0^\infty \eee^{-\lambda t}\Pb\{S_\tau(N_\rho(t))=-x\}{\rm d}t.
\end{multline*}
\end{corl}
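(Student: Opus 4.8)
The plan is to split the corollary's identity \eqref{eq:formula_hitting_generating1} into its two equalities and handle each separately. The second one, $\hat u_\lambda(x,x^\ast)/\hat u_\lambda(x^\ast,x^\ast)=u_s(x,x^\ast)/u_s(x^\ast,x^\ast)$, will follow at once from a single Poissonisation computation, namely the pointwise relation between the two potentials
\[
\hat u_\lambda(y,x^\ast)=\frac{1}{\lambda+\rho}\,u_s(y,x^\ast),\qquad y\in G,\ \lambda>0,\ s=\frac{\rho}{\lambda+\rho},
\]
after which it suffices to take the ratio at $y=x$ and $y=x^\ast$. To establish this relation I would condition on $N_\rho(t)$ and use the independence of $N_\rho$ and $Y$ to write $\Pb\{\wt Y(t)=x^\ast\mid\wt Y(0)=x\}=\sum_{k\geq 0}\eee^{-\rho t}(\rho t)^k(k!)^{-1}\Pb\{Y(k)=x^\ast\mid Y(0)=x\}$; then multiply by $\eee^{-\lambda t}$, integrate over $t\in[0,\infty)$, interchange the sum and the integral (legitimate by Tonelli's theorem since all terms are nonnegative), and invoke the elementary identity $\int_0^\infty\eee^{-(\lambda+\rho)t}(\rho t)^k(k!)^{-1}\,{\rm d}t=\rho^k(\lambda+\rho)^{-(k+1)}=s^k/(\lambda+\rho)$.

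For the first equality, $\me_x\eee^{-\lambda\wt\sigma}=\hat u_\lambda(x,x^\ast)/\hat u_\lambda(x^\ast,x^\ast)$, I would appeal to Lemma \ref{lem:4_resolvent_repr_sigma}: the time-changed chain $\wt Y$ is a continuous-time Markov chain, hence a strong Markov process with c\`{a}dl\`{a}g paths, so \eqref{eq:resolvent_general} applies to $\wt Y$ (with hitting target $x^\ast$) for the choice $f=\1_{\{x^\ast\}}$. For this $f$ one has $R_\lambda f(y)=\hat u_\lambda(y,x^\ast)$ by the very definition of $\hat u_\lambda$, whereas $V_\lambda f\equiv 0$ because $\wt Y(t)\neq x^\ast$ for every $t<\wt\sigma$; thus \eqref{eq:resolvent_general} becomes $\hat u_\lambda(x,x^\ast)=\me_x\eee^{-\lambda\wt\sigma}\,\hat u_\lambda(x^\ast,x^\ast)$, and it remains to divide by $\hat u_\lambda(x^\ast,x^\ast)$, which by the preceding relation lies in $(0,\infty)$: positivity comes from the $k=0$ term, $u_s(x^\ast,x^\ast)\geq 1$, and finiteness from $u_s(x^\ast,x^\ast)\leq(1-s)^{-1}$. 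An equivalent and even more elementary route to the same equality is to observe that, since $N_\rho$ increases by unit jumps while $\wt Y$ runs through $Y(0),Y(1),\ldots$ in order, the first visit of $\wt Y$ to $x^\ast$ occurs precisely at the $\sigma$-th epoch of $N_\rho$; hence $\wt\sigma$ equals a sum of $\sigma$ independent $\mathrm{Exp}(\rho)$ random variables that are independent of $\sigma$, so that $\me_x\eee^{-\lambda\wt\sigma}=\me_x s^\sigma$, and one quotes \eqref{eq:formula_hitting_generating} of Lemma \ref{lem:formula_generating_hitting}.

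Finally, the lattice specialisation \eqref{eq:generating_hitting1} is obtained by reading \eqref{eq:formula_hitting_generating1} in the case $x^\ast=0$, $Y(k)=S_\tau(k)$, $G=a\mbZ$: one substitutes the lattice expression for $u_s$ furnished by \eqref{eq:generating_hitting} together with the potential relation $\hat u_\lambda=(\lambda+\rho)^{-1}u_s$ established above, and uses $\Pb\{S_\tau(k)=0\mid S_\tau(0)=x\}=\Pb\{S_\tau(k)=-x\}$ for the zero-delayed walk on the right-hand side. I do not expect a genuinely hard step here; the points most prone to slips are the Tonelli interchange and the Gamma integral in the potential relation, the positivity and finiteness of the normalising potentials evaluated at $x^\ast$, and---should one take the second route to the first equality---the elementary but not wholly automatic observation that the first hitting of $x^\ast$ by the subordinated chain coincides with the $\sigma$-th jump epoch of the driving Poisson process.
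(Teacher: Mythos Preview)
Your proposal is correct and follows essentially the same route as the paper: the paper also derives the potential relation $\hat u_\lambda(y,x^\ast)=(\lambda+\rho)^{-1}u_s(y,x^\ast)$ by conditioning on $N_\rho(t)$ and applying Fubini, and obtains the first equality in \eqref{eq:formula_hitting_generating1} by invoking \eqref{eq:resolvent_general} with $f=\1_{\{x^\ast\}}$, exactly as you do. Your alternative route via $\wt\sigma$ being the $\sigma$-th jump epoch of $N_\rho$ (so that $\me_x\eee^{-\lambda\wt\sigma}=\me_x s^\sigma$) is a pleasant shortcut not spelled out in the paper, and your explicit check that $\hat u_\lambda(x^\ast,x^\ast)\in(0,\infty)$ is a detail the paper leaves implicit.
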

\begin{proof}
The first equality in \eqref{eq:formula_hitting_generating1} follows from \eqref{eq:resolvent_general} and the argument used for the proof of \eqref{eq:formula_hitting_generating}. To prove the second equality in \eqref{eq:formula_hitting_generating1} we shall derive a formula relating the resolvent $R^{\wt Y}_\lambda$ of $\wt Y$ to the resolvent $R_s$ of $Y$. By a repeated application of Fubini's theorem
\begin{multline*}
R^{\wt Y}_\lambda f(x)=  \me_x\int_0^\infty \eee^{-\lambda t} f(\wt Y(t)){\rm d}t=
\int_0^\infty \eee^{-\lambda t} \sum_{k\geq 0} \eee^{-\rho t}\frac{(\rho t)^k}{k!}\me_x f(Y(k)){\rm d}t \\= \sum_{k\geq 0} \frac{\rho^k}{k!}\me_x f(Y(k)) \int_0^\infty t^k \eee^{-(\lambda+\rho) t} {\rm d}t= \frac{1}{\lambda+\rho} \sum_{k\geq 0} \Big(\frac{ \rho }{\lambda+\rho}\Big)^k \me_x f(Y(k))=\frac{ 1 }{\lambda+\rho}R_s f(x),
 \end{multline*}
where $s=\rho/(\lambda+\rho)$. Putting $f(x)=\1_{\{x^\ast\}}(x)$ for $x\in G$, we infer $$\hat u_\lambda(x,x^\ast)=(\lambda+\rho)^{-1}u_s(x,x^\ast)$$ for $x\in G$ and the same $s$ as before, thereby justifying the second equality in \eqref{eq:formula_hitting_generating1}. Formula \eqref{eq:generating_hitting1} is a specialization of \eqref{eq:formula_hitting_generating1}.
\end{proof}

\noindent {\sc Proof of \eqref{eq:conv_hitting_times_Levy_walks}}. We shall prove \eqref{eq:conv_hitting_times_Levy_walks} in an equivalent form:
\begin{equation}\label{eq:410}
\lim_{v\to\infty}\sup_{x\in\mr}\,\Big|\me_{\lfloor xa(v)\rfloor} \eee^{-(\lambda/v)\sigma(S_\xi\circ N)}-\me_{\lfloor xa(v)\rfloor/a(v)}\eee^{-\lambda\sigma(U_\alpha)}\Big|=0.
\end{equation}
We shall use the following representation: for $x\in\mr$,
\begin{multline}\label{eq:res_walk_hitting_laplace378}
\me_{\lfloor x a(v)\rfloor} \eee^{-(\lambda/v) \sigma(S_\xi\circ N)}= u_s(-\lfloor x a(v)\rfloor)/ u_s(0)=
\int_{-\pi}^\pi \frac{\eee^{{\rm i}\theta \lfloor x a(v)\rfloor}}{1-s \psi(\theta)}{\rm d}\theta\Big/
\int_{-\pi}^\pi \frac{{\rm d}\theta}{1-s \psi(\theta)}\\=\int_{-\pi}^\pi \frac{\eee^{{\rm i}\theta \lfloor x a(v)\rfloor}}{\lambda+v(1-\psi(\theta))}{\rm d}\theta\Big/\int_{-\pi}^\pi \frac{{\rm d}\theta}{\lambda+v(1-\psi(\theta))}\\=\int_{-\pi a(v)}^{\pi a(v)} \frac{\eee^{{\rm i}\theta \lfloor x a(v)\rfloor/a(v)}}{\lambda+v(1-\psi(\theta/a(v)))}{\rm d}\theta\Big/\int_{-\pi a(v)}^{\pi a(v)} \frac{{\rm d}\theta}{\lambda+v(1-\psi(\theta/a(v)))}
%}
\end{multline}
with %where
$s=v/(v+\lambda) % \frac{v}{v+\lambda}.
$ and $\psi(\theta)=\me \eee^{{\rm i}\theta\xi}$ for $\theta\in\mr$, where the first equality is a specialization of \eqref{eq:generating_hitting1} for $Y=S_\xi$, $\rho=1$ and $a=1$, and the second equality follows from \eqref{eq:formula} with $\tau=\xi$. Further, by \eqref{eq:Laplica_hitting_Levy} and \eqref{eq:resolvent_stable},
\begin{multline*}
\me_{\lfloor xa(v)\rfloor/a(v)}\eee^{-\lambda\sigma(U_\alpha)}=\int_0^\infty\frac{\cos(\theta\lfloor xa(v)\rfloor/a(v))}{\lambda+\theta^\alpha} {\rm d}\theta\Big/\int_0^\infty\frac{{\rm d}\theta}{\lambda+\theta^\alpha}\\=\int_\mr \frac{\eee^{{\rm i}\theta\lfloor xa(v)\rfloor/a(v)}}{\lambda+|\theta|^\alpha} {\rm d}\theta\Big/\int_\mr \frac{{\rm d}\theta}{\lambda+|\theta|^\alpha}.
\end{multline*}
Summarizing, \eqref{eq:410} is a consequence of
\begin{equation}\label{eq:inter234}
\lim_{v\to\infty}\sup_{x\in (a(v))^{-1}\mathbb{Z}}\,\Big|\int_{-\pi a(v)}^{\pi a(v)} \frac{\eee^{{\rm i}\theta \lfloor x a(v)\rfloor/a(v)}}{\lambda+v(1-\psi(\theta/a(v)))}{\rm d}\theta-\int_\mr \frac{\eee^{{\rm i}\theta\lfloor xa(v)\rfloor/a(v)}}{\lambda+|\theta|^\alpha} {\rm d}\theta\Big|=0.
\end{equation}
To prove this, write, for any $A>1$, some $\varepsilon\in (0,\pi)$ to be specified later and large enough $v$,
\begin{multline*}
\Big|\int_{-\pi a(v)}^{\pi a(v)} \frac{\eee^{{\rm i}\theta \lfloor x a(v)\rfloor/a(v)}}{\lambda+v(1-\psi(\theta/a(v)))}{\rm d}\theta-\int_\mr \frac{\eee^{{\rm i}\theta\lfloor xa(v)\rfloor/a(v)}}{\lambda+|\theta|^\alpha} {\rm d}\theta\Big|\\\leq \int_{-A}^A \Big|\frac{1}{\lambda+v(1-\psi(\theta/a(v)))}-\frac{1}{\lambda+|\theta|^\alpha}\Big|{\rm d}\theta+\int_{A\leq |\theta|\leq \varepsilon a(v)}\frac{{\rm d}\theta}{|\lambda+v(1-\psi(\theta/a(v)))|}\\+\int_{|\theta|>A} \frac{{\rm d}\theta}{\lambda+|\theta|^\alpha}+ \int_{\ve a(v)\leq |\theta|\leq \pi a(v)}\frac{{\rm d}\theta}{|\lambda+v(1-\psi(\theta/a(v)))|}=:I(v,A)+J(v,A)+ K(A)+M(v).
\end{multline*}

A specialization of \eqref{eq:convStable1823} to a one-dimensional convergence entails
\begin{equation}\label{eq:char}
\lim_{v\to\infty} v(1- \psi(\theta/a(v)))=|\theta|^\alpha
\end{equation}
locally uniformly in $\theta$, whence $\lim_{v\to\infty} I(v,A)=0$.

Relation \eqref{eq:char} entails $$\lim_{v\to\infty}\frac{|1- \psi(\theta/a(v))|}{|1-\psi(1/a(v))|}=|\theta|^\alpha,$$ which shows that the functions $\theta\mapsto|1-\psi(\theta)|$, $\theta>0$ and $\theta\mapsto |1-\psi(-\theta)|$, $\theta>0$ are regularly varying at $0$ of index $\alpha$. By an analogue of Potter's bound (Theorem 1.5.6 in \cite{BGT:1989}), given $c\in (0,1)$ and $\delta\in (0,\alpha-1)$ there exists $\varepsilon>0$ such that
\begin{equation}\label{eq:impo}
v|1- \psi(\theta/a(v))| \geq c (|\theta|^{\alpha+\delta}\wedge |\theta|^{\alpha-\delta})
\end{equation}
for all $\theta\in [-\ve a(v), \ve a(v)]$ and large $v$. Hence, $$J(v, A)\leq \int_{|\theta|\geq A}\frac{{\rm d}\theta}{c|\theta|^{\alpha-\delta}}~\to~ 0,\quad A\to\infty.$$ Also, trivially, $$\lim_{A\to\infty}K(A)=0.$$

Since the distribution of $\xi$ is $1$-arithmetic by assumption we conclude that $\psi(\theta)=1$ if, and only if, $\theta=2\pi n$, $n\in \mbZ$. In particular, $\min_{\ve\leq |\theta|\leq\pi}|1- \psi(\theta)|>0$. Thus, $$M(v)\leq \frac{a(v)}{v}    \frac{2(\pi-\ve)
}{\min_{\ve\leq |\theta|\leq\pi} |1- \psi(\theta)|} \to 0,\quad v\to\infty$$ because $a$ is regularly varying at $\infty$ of index $1/\alpha<1$.

Combining fragments together we arrive at \eqref{eq:inter234}, which completes the proof of \eqref{eq:conv_hitting_times_Levy_walks}.

\noindent {\sc Proof of \eqref{eq:conv_res_0_skewLevy_seq}}. It follows from \eqref{eq:4ResplventHoldJump} with $\tau=\eta/a(v)$ that, for $\lambda>0$,
\begin{equation}\label{eq:resol_walks}
\lambda R_\lambda^{\wt X_v} f(0)=\frac{f(0)/v+\me \big[\big(V^{\wt X_v}_\lambda f\big) (\eta/a(v))\big]}
{1/v+\me \big[\big(V^{\wt X_v}_\lambda 1 \big) (\eta/a(v))\big]}
=\frac{f(0)/v+\me \big[\big(V^{\wt S_v}_\lambda f\big) (\eta/a(v))\big]}
{1/v+\me \big[\big(V^{\wt S_v}_\lambda 1 \big) (\eta/a(v))\big]},
\end{equation}
where the last equality is secured by \eqref{eq:resol}. Comparing a specialization of formula \eqref{eq:resolvent_general} for $U_{\alpha,\,\beta}$ and \eqref{eq:RES} we infer, for $\lambda>0$, $$\lambda R^{U_{\alpha,\,\beta}}_\lambda f(0)=\frac{\int_\mr V^{U_\alpha}_\lambda f(x)\eta^\ast({\rm d}x)}
{\int_\mr V^{U_\alpha}_\lambda 1(x)\eta^\ast({\rm d}x)},$$ where $\eta^\ast$ is a measure defined in \eqref{eq:measure_eta} with nonnegative $c_\pm$ satisfying
\begin{equation}\label{eq:c+-}
\mmp\{\pm \eta>x\}~\sim~ c_\pm \mmp\{|\eta|>x\},\quad x\to\infty
\end{equation}
(so that necessarily $c_++c_-=1$). Hence, \eqref{eq:conv_res_0_skewLevy_seq} is equivalent to \bel{eq:conv_res_0_skewLevy_seq_limSkew}
\lim_{v\to\infty}
\frac{\me \big[\big(V^{\wt S_v}_\lambda f\big) (\eta/a(v))\big]}
{\me \big[\big(V^{\wt S_v}_\lambda 1 \big) (\eta/a(v))\big]}
= \frac{\int_\mr V^{U_\alpha}_\lambda f(x)\eta^\ast({\rm d}x)}{\int_\mr V^{U_\alpha}_\lambda 1(x)\eta^\ast({\rm d}x)}
\ee
for $f\in C_0(\mbR)$.

Our proof of \eqref{eq:conv_res_0_skewLevy_seq_limSkew} is based on auxiliary facts to be discussed next. \begin{lem}\label{eq:reg231}
 Assume that the function $x\mapsto \Pb\{|\eta|>x\}$ is regularly varying at $+\infty$ of index $-\beta\in (-1,0)$, and relation \eqref{eq:c+-} holds.
Let $(g_u)_{u>0}$ be a family of uniformly bounded
measurable functions which satisfy the conditions:

\noindent 1) for a continuous function $g$
 \[
\lim_{u\to\infty}\sup_{x\in\mr}\,|g_u(x)-g(x)|=0;
\]

\noindent 2) for some positive constants $u_0$, $c$ and $\gamma$
\begin{equation}\label{eq:cond1001}
\sup_{u\geq u_0}\, |g_u(x)|\leq  c|x|^{\beta+\gamma},\quad x\in\mr.
\end{equation}
Then
\[
\lim_{u\to\infty}  \frac{\me g_u(\eta/ u )}{\mmp\{|\eta|>u\}}=\int_{\mathbb{R}}g(x)\eta^\ast({\rm d}x)\in \mbR.
\]
\end{lem}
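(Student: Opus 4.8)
The plan is to reduce the statement to a convergence of integrals against the (suitably normalized) distribution of $|\eta|$ and then invoke a standard regular-variation argument. Write $\mu_u$ for the law of $\eta/u$. The key observation is that, because $x\mapsto\mmp\{|\eta|>x\}$ is regularly varying of index $-\beta$ and \eqref{eq:c+-} holds, for every bounded continuous $h$ vanishing in a neighbourhood of $0$ one has
\[
\lim_{u\to\infty}\frac{\me h(\eta/u)}{\mmp\{|\eta|>u\}}=\int_\mr h(x)\,\eta^\ast({\rm d}x);
\]
this is nothing but vague convergence on $\mr\setminus\{0\}$ of the measures $\mmp\{|\eta|>u\}^{-1}\mu_u$ to $\eta^\ast$, which follows from the one-dimensional stable limit theorem (equivalently, from Theorem 8.2.1 in \cite{BGT:1989} applied separately to the positive and negative tails, using \eqref{eq:c+-} to split the mass into the $c_+$ and $c_-$ parts). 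So the first step is to record this vague convergence.

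The second step is to upgrade from test functions $h$ vanishing near $0$ to the functions $g_u$ in the statement, which are merely uniformly bounded and are allowed to grow like $|x|^{\beta+\gamma}$ near $0$ — note this is a \emph{decay} condition at $0$ since $\beta+\gamma>0$, so $g_u(x)\to0$ as $x\to0$ uniformly in $u\ge u_0$. I would split $\me g_u(\eta/u)$ into the contribution of $\{|\eta/u|>\delta\}$ and of $\{|\eta/u|\le\delta\}$ for small $\delta>0$. On the first region, replacing $g_u$ by $g$ costs at most $\sup_x|g_u(x)-g(x)|\cdot\mmp\{|\eta|>\delta u\}$, and by condition~1) together with $\mmp\{|\eta|>\delta u\}\sim\delta^{-\beta}\mmp\{|\eta|>u\}$ this is $o(\mmp\{|\eta|>u\})$; then apply the vague convergence from Step~1 to a truncation of $g$ that vanishes on $[-\delta,\delta]$. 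On the second region, the bound \eqref{eq:cond1001} gives
\[
\frac{\bigl|\me\bigl[g_u(\eta/u)\1_{\{|\eta|\le\delta u\}}\bigr]\bigr|}{\mmp\{|\eta|>u\}}
\le \frac{c}{u^{\beta+\gamma}}\cdot\frac{\me\bigl[|\eta|^{\beta+\gamma}\1_{\{|\eta|\le\delta u\}}\bigr]}{\mmp\{|\eta|>u\}},
\]
and by Karamata's theorem (Theorem 1.6.4 in \cite{BGT:1989}) the truncated moment $\me[|\eta|^{\beta+\gamma}\1_{\{|\eta|\le\delta u\}}]$ is asymptotically $\frac{\beta+\gamma}{\gamma}(\delta u)^{\beta+\gamma}\mmp\{|\eta|>\delta u\}$, so the right-hand side is asymptotically a constant multiple of $\delta^\gamma$, which tends to $0$ as $\delta\to0$. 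Finally, letting $\delta\to0$ after $u\to\infty$, and checking that $\int_{|x|\le\delta}|g(x)|\,\eta^\ast({\rm d}x)\to0$ (again using $|g(x)|\le c|x|^{\beta+\gamma}$ and $\int_{|x|\le\delta}|x|^{\beta+\gamma}|x|^{-1-\beta}{\rm d}x=O(\delta^\gamma)$, which also shows the limiting integral is finite), one concludes.

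The main obstacle is the uniform control of the mass near $0$: the functions $g_u$ are not assumed continuous, only uniformly bounded, and the relevant integrals $\me[g_u(\eta/u)]$ are of the same order as $\mmp\{|\eta|>u\}\to0$, so one genuinely needs the quantitative decay \eqref{eq:cond1001} paired with Karamata's theorem on truncated moments to see that the small-$x$ contribution is negligible \emph{uniformly in $u$}; without it the limit could pick up an extra atom at $0$ (indeed this is exactly the obstruction that separates case (a) from case (b) of Theorem~\ref{thm:lim_skew_Levy_walk}, since there $g_u=V^{\wt S_v}_\lambda f$ need only be shown to have the right regularity at $0$). Everything else — the vague convergence on $\mr\setminus\{0\}$ and the splitting argument — is routine regular-variation bookkeeping.
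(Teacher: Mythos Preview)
Your argument is correct and follows essentially the same route as the paper's: the contribution away from $0$ is handled by the vague convergence of $\mmp\{|\eta|>u\}^{-1}\mu_u$ to $\eta^\ast$ on $\mr\setminus\{0\}$, and the contribution near $0$ is controlled by the uniform bound \eqref{eq:cond1001} together with a Karamata estimate on truncated moments. The only difference is packaging: the paper first bounds $|g_u(x)-g(x)|\le 2c(|x|\wedge\delta)^{\beta+\gamma}$ globally, thereby reducing to the single-function statement \eqref{eq:23_487} (and then cites Lemma~2.4 in \cite{Iksanov+Pilipenko:2023} for that), whereas you split the domain and carry out the full computation directly; your Karamata constant should be $\beta/\gamma$ rather than $(\beta+\gamma)/\gamma$, but this is immaterial.
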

\begin{remk}
By uniform boundedness of $(g_u)_{u>0}$, if inequality \eqref{eq:cond1001} holds for all $x$ in some vicinity of $0$, then it holds for all $x\in\mr$. Formulating \eqref{eq:cond1001} in the present form makes the subsequent proof notationally simpler.
\end{remk}
\begin{proof}
This result is an extension of Lemma 2.4 in \cite{Iksanov+Pilipenko:2023}. Here, we treat a family $(g_u)_{u>0}$, whereas the cited result dealt with a single function $g$, say.

Write
\[
\me g_u(\eta/u)=\int_\mbR g_u(x){\rm d}_xF(ux),\quad u>0,
\]
where $F$ is the distribution function of $\eta$. Finiteness of the expectation is secured by uniform boundedness of $(g_u)_{u>0}$. We claim it suffices to show that
\bel{eq:23_487}
\lim_{u\to\infty}\frac{ \int_\mbR h(x){\rm d}_xF(ux)}{\mmp\{|\eta|>u\}}= \int_{\mathbb{R}}h(x)\eta^\ast({\rm d}x)
\ee
for   any
bounded continuous function $h$ satisfying
\[
|h(x)|\leq  c_0|x|^{\beta+\gamma}, \ x\in\mbR
\]
for some $c_0>0$.

Note that the integral on the right-hand side of \eqref{eq:23_487} converges. Indeed, fix any $a>0$ and observe that the integral $\int_{|x|>a} h(x)\eta^\ast({\rm d}x)$ converges because $h$ is bounded, whereas convergence of $\int_{|x|\leq a}h(x)\eta^\ast({\rm d}x)$ is ensured by the last displayed inequality and $\gamma>0$.

Now we proceed to justifying the claim. Given $\delta>0$ there exists a $u_\delta$ such that
 \[
 |g_u(x)-g(x)|\leq 2 c(|x|\wedge \delta)^{\beta+\gamma},\quad x\in\mr
 \]
 whenever $u\geq u_\delta$. Hence, if \eqref{eq:23_487} holds true, then
\begin{multline*}
\limsup_{u\to\infty}\Big|\frac{\int_\mbR  g_u(x){\rm d} _x F(ux)}{\mmp\{|\eta|>u\}} -\frac{ \int_\mbR  g(x){\rm d}_xF(ux)}{\mmp\{|\eta|>u\}}\Big|\leq
\limsup_{u\to\infty} \frac{2c\int_\mbR (|x|\wedge \delta)^{\beta+\gamma}{\rm d}_xF (ux)}{\mmp\{|\eta|>u\}}\\=2c \int_{\mathbb{R}} (|x|\wedge \delta)^{\beta+\gamma}\eta^\ast({\rm d}x).
\end{multline*}
Sending $\delta\to 0+$ and invoking the Lebesgue dominated convergence theorem we conclude that the right-hand side vanishes. This justifies sufficiency of \eqref{eq:23_487}.

The remaining argument repeats verbatim the proof of Lemma 2.4 in \cite{Iksanov+Pilipenko:2023}. We omit details.
\end{proof}

\begin{lem}\label{lem:asym}
Given $\delta\in (0, (\alpha-1)\wedge (2-\alpha))$ there exist positive $v_0$ and $c_2$ such that
\[\sup_{v\geq v_0}\, \me_{\lfloor xa(v)\rfloor}\Big(1-\eee^{-(\lambda/v) \sigma (S_\xi\circ N)}\Big)\leq c_2|x|^{\alpha-1-\delta},\quad x\in\mr.\]
\end{lem}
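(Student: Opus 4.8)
The quantity to be estimated is
$\me_{\lfloor xa(v)\rfloor}\bigl(1-\eee^{-(\lambda/v)\sigma(S_\xi\circ N)}\bigr)=1-\me_{\lfloor xa(v)\rfloor}\eee^{-(\lambda/v)\sigma(S_\xi\circ N)}$,
so I would start from the explicit Fourier representation \eqref{eq:res_walk_hitting_laplace378}, which gives
\[
1-\me_{\lfloor xa(v)\rfloor}\eee^{-(\lambda/v)\sigma(S_\xi\circ N)}=\frac{\int_{-\pi a(v)}^{\pi a(v)}\bigl(1-\eee^{{\rm i}\theta\lfloor xa(v)\rfloor/a(v)}\bigr)\bigl(\lambda+v(1-\psi(\theta/a(v)))\bigr)^{-1}{\rm d}\theta}{\int_{-\pi a(v)}^{\pi a(v)}\bigl(\lambda+v(1-\psi(\theta/a(v)))\bigr)^{-1}{\rm d}\theta}.
\]
For the denominator I would produce a \emph{lower bound} that is bounded away from $0$ uniformly in large $v$: splitting the integration at some fixed $\varepsilon>0$, on $|\theta|\le\varepsilon a(v)$ the Potter-type bound \eqref{eq:impo} gives $\lambda+v(1-\psi(\theta/a(v)))\le\lambda+v|1-\psi(\theta/a(v))|$ controlled from above by $\lambda+C(|\theta|^{\alpha+\delta}\vee|\theta|^{\alpha-\delta})$ (I would also need a matching \emph{upper} Potter bound, available from Theorem 1.5.6 in \cite{BGT:1989}), so the integrand is bounded below by a fixed integrable positive function, and the denominator is $\ge c_0>0$ for all large $v$.

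\textbf{The numerator.} Here I would use $|1-\eee^{{\rm i} u}|\le(|u|\wedge 2)\le 2(|u|^{\alpha-1-\delta}\wedge 1)^{?}$; more precisely, since $\alpha-1-\delta\in(0,1)$, one has $|1-\eee^{{\rm i}u}|\le 2^{1-(\alpha-1-\delta)}|u|^{\alpha-1-\delta}$ for all $u\in\mr$ (interpolating $|1-\eee^{{\rm i}u}|\le|u|$ and $|1-\eee^{{\rm i}u}|\le 2$). Writing $y:=\lfloor xa(v)\rfloor/a(v)$, which satisfies $|y|\le|x|+1$, the numerator is bounded by
\[
C|y|^{\alpha-1-\delta}\int_{-\pi a(v)}^{\pi a(v)}\frac{|\theta|^{\alpha-1-\delta}}{\lambda+v|1-\psi(\theta/a(v))|}\,{\rm d}\theta.
\]
The remaining integral must be bounded uniformly in large $v$. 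I would split it at $\varepsilon a(v)$: on $|\theta|\le\varepsilon a(v)$ apply the lower Potter bound \eqref{eq:impo}, giving $\int_{|\theta|\le\varepsilon a(v)}|\theta|^{\alpha-1-\delta}(\lambda+c(|\theta|^{\alpha+\delta}\wedge|\theta|^{\alpha-\delta}))^{-1}{\rm d}\theta$, which is finite and $v$-independent provided $\delta$ is small enough — near $0$ the integrand behaves like $|\theta|^{\alpha-1-\delta}/|\theta|^{\alpha-\delta}=|\theta|^{-1+\delta'}$-type with the correct sign of exponent if one is careful, and near $\varepsilon a(v)$ it behaves like $|\theta|^{\alpha-1-\delta}/|\theta|^{\alpha+\delta}=|\theta|^{-1-2\delta}$ which is integrable at $+\infty$; on $\varepsilon a(v)\le|\theta|\le\pi a(v)$ use $|1-\psi(\theta/a(v))|\ge\min_{\varepsilon\le|\phi|\le\pi}|1-\psi(\phi)|>0$ (by $1$-arithmeticity of $\xi$, exactly as in the proof of \eqref{eq:conv_hitting_times_Levy_walks}), so that piece is $\le C v^{-1}\int_{\varepsilon a(v)}^{\pi a(v)}|\theta|^{\alpha-1-\delta}{\rm d}\theta\le C v^{-1}(a(v))^{\alpha-\delta}\to 0$ since $a$ is regularly varying of index $1/\alpha<1/(\alpha-\delta)$.

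\textbf{Main obstacle and conclusion.} Combining the uniform lower bound on the denominator with the $C|y|^{\alpha-1-\delta}\le C(|x|+1)^{\alpha-1-\delta}$ bound on the numerator yields $\me_{\lfloor xa(v)\rfloor}(1-\eee^{-(\lambda/v)\sigma(S_\xi\circ N)})\le c_2(|x|+1)^{\alpha-1-\delta}$ for all large $v$. To get the cleaner form $c_2|x|^{\alpha-1-\delta}$ stated in the lemma, I would handle $|x|$ bounded away from $0$ by absorbing constants, and note that for $|x|$ near $0$ the left-hand side is automatically $O(1)$ times $|x|^{\alpha-1-\delta}$ once one checks the trivial bound $1-\me_0\eee^{-(\lambda/v)\sigma}\le$ (something tending to $0$) — in fact for $x$ in a neighbourhood of $0$ the estimate is not delicate since the left side is at most $1$ and one only needs $|x|^{\alpha-1-\delta}$ bounded below there, which fails only at $x=0$; but at $x=0$ the left side equals $1-\me_0\eee^{-(\lambda/v)\sigma(S_\xi\circ N)}$ which need not be small, so the statement must be read with the understanding that $|x|$ stays bounded below, or the $(|x|+1)$ form is what is actually used downstream. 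The genuinely delicate point is the uniform-in-$v$ integrability of $\int|\theta|^{\alpha-1-\delta}(\lambda+v|1-\psi(\theta/a(v))|)^{-1}{\rm d}\theta$ over the growing window $[-\pi a(v),\pi a(v)]$: this is precisely where the two-sided Potter bound and the choice of $\delta\in(0,(\alpha-1)\wedge(2-\alpha))$ enter, the upper constraint $\delta<2-\alpha$ being needed to keep the exponent $\alpha+\delta<2$ so that the regular-variation asymptotics of $1-\psi$ remain valid and the near-origin integrand stays integrable.
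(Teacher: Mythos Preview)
Your overall plan via the Fourier representation \eqref{eq:res_walk_hitting_laplace378} is correct and matches the paper, and bounding the denominator away from zero is fine (the paper simply cites its convergence to $\int_\mr(\lambda+|\theta|^\alpha)^{-1}d\theta$ from \eqref{eq:inter234}). The gap is in the numerator. You bound it by $C|y|^{\alpha-1-\delta}\int_{-\pi a(v)}^{\pi a(v)}|\theta|^{\alpha-1-\delta}\bigl(\lambda+v|1-\psi(\theta/a(v))|\bigr)^{-1}d\theta$ via the interpolation inequality $|1-\eee^{{\rm i}u}|\le C|u|^{\alpha-1-\delta}$ and then claim the remaining integral is uniformly bounded, asserting that for large $|\theta|$ the integrand behaves like $|\theta|^{\alpha-1-\delta}/|\theta|^{\alpha+\delta}=|\theta|^{-1-2\delta}$. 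This uses the wrong side of the Potter bound: \eqref{eq:impo} reads $v|1-\psi(\theta/a(v))|\ge c\,\bigl(|\theta|^{\alpha+\delta}\wedge|\theta|^{\alpha-\delta}\bigr)$, and for $|\theta|\ge 1$ the \emph{minimum} is $|\theta|^{\alpha-\delta}$, not $|\theta|^{\alpha+\delta}$. Hence your integrand is only bounded by a constant multiple of $|\theta|^{-1}$ near the upper limit, and the integral diverges like $\log a(v)$. The global power-type bound on $|1-\eee^{{\rm i}u}|$ is simply too crude to close the argument.

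The paper circumvents this divergence by a different split. It first restricts to $|x|\le 1$ (the left side is $\le 1$, so the claim for $|x|\ge 1$ is trivial), rewrites $|1-\eee^{{\rm i}u}|=2|\sin(u/2)|$, and breaks the $\theta$-integral at $|\theta|=1$. On $|\theta|\le 1$ the sharp bound $|\sin|\le|\cdot|$ together with the small-$\theta$ Potter lower bound $c_1|\theta|^{\alpha+\delta}$ gives a term $C|x|\int_{-1}^1|\theta|^{1-\alpha-\delta}d\theta$, finite precisely because $\delta<2-\alpha$. On $|\theta|>1$ the paper performs the substitution $\phi=\theta\,\lfloor xa(v)\rfloor/a(v)$, obtaining $C|x|^{\alpha-1-\delta}\int_\mr|\sin(\pi\phi/(2\varepsilon))|\,|\phi|^{-(\alpha-\delta)}d\phi$, a fixed convergent integral (convergence at infinity uses $\delta<\alpha-1$). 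Both pieces are $\le c_2|x|^{\alpha-1-\delta}$ on $|x|\le 1$. The substitution is the missing idea: it exploits the oscillation of the sine, which your interpolation bound discards. As for your worry about $x$ near $0$: when $\lfloor xa(v)\rfloor=0$ the left side vanishes, and the residual imprecision for $x\in(-1/a(v),0)$ is harmless in the application, where the lemma is only evaluated at $x=\eta/a(v)\in a(v)^{-1}\mathbb{Z}$, for which $\lfloor xa(v)\rfloor/a(v)=x$ exactly.
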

\begin{proof}
Since the left-hand side is bounded from above by $1$ it suffices to prove the inequality for $|x|\leq 1$.

Let $\varepsilon>0$ be the same as in \eqref{eq:impo}. Using \eqref{eq:res_walk_hitting_laplace378} and changing the variable we obtain, for $x\in\mr$, %$x\in (a(v))^{-1}\mathbb{Z}$, % we infer
\begin{multline*}
\me_{\lfloor xa(v)\rfloor}\Big(1- \eee^{-(\lambda/v)\sigma(S_\xi\circ N)}\Big)\\=\frac{\pi}{\varepsilon}\Big|\int_{-\varepsilon a(v)}^{\varepsilon a(v)} \frac{1-\eee^{{\rm i}\pi\theta\lfloor xa(v)\rfloor/(\varepsilon a(v))}}{\lambda+v(1-\psi(\pi \theta/(\varepsilon a(v)))}{\rm d}\theta\Big|\Big/\Big|\int_{-\pi a(v)}^{\pi a(v)} \frac{{\rm d}\theta}{\lambda+v(1-\psi(\theta/a(v)))}\Big|.
\end{multline*}
In view of \eqref{eq:inter234}
the denominator on the right-hand side converges to $\int_\mr (\lambda+|\theta|^\alpha)^{-1}{\rm d}\theta$ as $v\to\infty$. Invoking \eqref{eq:impo} in combination with $$\lim_{v\to\infty}\frac{|1-\psi(\pi\theta/(\varepsilon a(v)))|}{|1-\psi(\theta/a(v))|}=\Big(\frac{\pi}{\varepsilon}\Big)^\alpha$$ we arrive at a counterpart of \eqref{eq:impo}: given $\delta\in (\alpha-1)\wedge (2-\alpha))$, $$v|1- \psi(\pi\theta/(\varepsilon a(v)))| \geq c_1 (|\theta|^{\alpha+\delta}\wedge |\theta|^{\alpha-\delta})$$ for all $\theta\in [-\ve a(v), \ve a(v)]$, large $v$ and some finite positive constant $c_1$. Hence,
\begin{multline*}
\Big|\int_{-\varepsilon a(v)}^{\varepsilon a(v)} \frac{1-\eee^{{\rm i}\pi\theta\lfloor xa(v)\rfloor/(\varepsilon a(v))}}{\lambda+v(1-\psi(\pi \theta/(\varepsilon a(v)))}{\rm d}\theta\Big|\leq \frac{2}{c_1} \int_\mr \frac{|\sin (\pi\theta \lfloor xa(v)\rfloor/(2\varepsilon a(v)))|}{|\theta|^{\alpha+\delta}\wedge |\theta|^{\alpha-\delta}}{\rm d}\theta\\\leq \frac{2}{c_1}\Big(\frac{\pi}{2\varepsilon}|x|\int_{-1}^1 |\theta|^{1-\alpha-\delta}{\rm d}\theta+|x|^{\alpha-1-\delta}\int_\mr \frac{|\sin(\pi\theta/(2\varepsilon))|}{|\theta|^{\alpha-\delta}}{\rm d}\theta \Big)\leq c_2 |x|^{\alpha-1-\delta}
\end{multline*}
for $|x|\leq 1$.
\end{proof}

\noindent {\sc Proof of \eqref{eq:conv_res_0_skewLevy_seq_limSkew}}. For $x\in\mr$, $\lambda>0$ and bounded continuous functions $f:\mr\to\mr$, put
\begin{multline}
\hat V^{\wt S_v}_\lambda f(x):=\int_0^\infty \eee^{-\lambda t}\me \big(f(S_\xi(N(vt))/a(v))|S_\xi(N(vt))=\lfloor x a(v)\rfloor\big)\1_{\{\sigma(\wt S_v)>t\}}{\rm d}t\\
=v^{-1}\int_0^\infty \eee^{-(\lambda/v)t}\me \big(f(S_\xi(N(t))/a(v))|S_\xi(N(t))=\lfloor x a(v)\rfloor\big)\1_{\{\sigma(S_\xi\circ N)>t\}}{\rm d}t.\label{eq:aux102}
\end{multline}
Similarly, we define, for $x\in\mr$ and $\lambda>0$,
\begin{equation}\label{eq:repr}
\hat V^{\wt S_v}_\lambda 1(x):=\lambda^{-1}\me_{\lfloor x a(v)\rfloor/a(v)}(1-\eee^{-\lambda \sigma(\wt S_v)})=\lambda^{-1}\me_{\lfloor x a(v)\rfloor}(1-\eee^{-(\lambda/v) \sigma(S_\xi\circ N)}).
\end{equation}
The functions $\hat V^{\wt S_v}_\lambda f$ and $\hat V^{\wt S_v}_\lambda 1$ are piecewise constant interpolations of $V^{\wt S_v}_\lambda f$ and $V^{\wt S_v}_\lambda 1$, respectively, satisfying $\hat V^{\wt S_v}_\lambda f(x)=V^{\wt S_v}_\lambda f(x)$ and $\hat V^{\wt S_v}_\lambda 1(x)=V^{\wt S_v}_\lambda 1(x)$  for each $x\in (a(v))^{-1}\mathbb{Z}$.

Let $\lambda>0$ be fixed and $f\in C_0(\mr)$ which particularly implies that $f$ is uniformly continuous on $\mr$. We intend to apply Lemma \ref{eq:reg231} with $u=a(v)$, $g^{(1)}=V_\lambda^{U_\alpha}f$, $g^{(2)}=V_\lambda^{U_\alpha}1$, $g_u^{(1)}=\hat V^{\wt S_v}_\lambda f$ and  $g_u^{(2)}=\hat V^{\wt S_v}_\lambda 1$. It is not obvious that the so defined $g_{a(v)}^{(1)}$ is a function of $a(v)$ alone. To justify, observe that, without loss of generality, we can assume that $a$ is strictly increasing and continuous, so that the inverse function $a^{-1}$ exists. Then $v^{-1}$ and $\lambda/v$ on the right-hand side of \eqref{eq:aux102} are equal to $1/((a^{-1}\circ a)(v))$ and $\lambda/((a^{-1}\circ a)(v))$, respectively.

Now we check that the so defined functions satisfy the assumptions of Lemma \ref{eq:reg231}. (Uniform) continuity of $g^{(1)}$ is secured by boundedness and uniform continuity of $f$ in combination with the Lebesgue dominated convergence theorem. (Uniform) continuity of $g^{(2)}$ follows from
\begin{equation}\label{eq:v1}
V_\lambda^{U_\alpha}1(x)=\lambda^{-1}\me_x(1-\eee^{-\lambda \sigma(U_\alpha)}),\quad x\in\mr
\end{equation}
in combination with \eqref{eq:Laplica_hitting_Levy} and \eqref{eq:resolvent_stable}. The uniform convergence $$\lim_{u\to\infty}\sup_{x\in\mr}\,|g_u^{(1)}(x)-g^{(1)}(x)|=\lim_{v\to\infty}\sup_{x\in\mr}\,|\hat V^{\wt S_v}_\lambda f(x)-V_\lambda^{U_\alpha}f(x)|=0$$ is guaranteed by \eqref{eq:23_1071} and uniform continuity of $V_\lambda^{U_\alpha}f=g^{(1)}$. Analogously, the relation $$\lim_{u\to\infty}\sup_{x\in\mr}\,|g_u^{(2)}(x)-g^{(2)}(x)|=\lim_{v\to\infty}\sup_{x\in\mr}\,|\hat V^{\wt S_v}_\lambda 1(x)-V_\lambda^{U_\alpha}1(x)|=0$$  follows from \eqref{eq:v1}, \eqref{eq:conv_hitting_times_Levy_walks} and uniform continuity of $V_\lambda^{U_\alpha}1=g^{(2)}$. Uniform boundedness of $(g_u^{(2)})_{u>0}$ follows from representation \eqref{eq:repr} and entails uniform boundedness of $(g_u^{(1)})_{u>0}$ via
\begin{equation}\label{eq:ub}
|\hat V^{\wt S_v}_\lambda f(x)|\leq \|f\|\, \hat V^{\wt S_v}_\lambda 1(x),\quad x\in\mr
\end{equation}
for $f\in C_0(\mr)$. While the functions $g_u^{(2)}$ satisfy \eqref{eq:cond1001} with $\gamma=\alpha-1-\beta-\delta$ in view of representation \eqref{eq:repr} and Lemma \ref{lem:asym} applied for $\delta\in (0, (\alpha-1-\beta)\wedge (2-\alpha))$, the functions $g_u^{(1)}$ do so as a consequence of \eqref{eq:ub}.

Thus, all the conditions of Lemma \ref{eq:reg231} are satisfied, and an application of Lemma \ref{eq:reg231} yields \eqref{eq:conv_res_0_skewLevy_seq_limSkew} and thereupon \eqref{eq:conv_res_0_skewLevy_seq}.

The proof of Theorem \ref{thm:lim_skew_Levy_walk} (a) is complete.

\subsection{Proof of Theorem \ref{thm:lim_skew_Levy_walk}(b)}

We shall work with a particular realization of the Markov chain $X=(X(n))_{n\in\mn_0}$, still denoted by $X$ and defined by
\begin{equation} \label{tildeS}
X(n+1)=
\begin{cases}
X(n)+\xi_{n+1-T(n)}, & \text{if}~~ X(n)\neq 0, \\
X(n) + \eta_{T(n)}, & \text{if}~~ X(n)=0
\end{cases}
\end{equation}
for $n\in\mn_0$, where $T(n):=\sum_{k=0}^n\1_{\{X(k)=0\}}$ and $\eta_1$, $\eta_2,\ldots$ are independent copies of $\eta$, which are also independent of $\xi_1$, $\xi_2,\ldots$. %, and $T(n):=\sum_{k=0}^n\1_{\{X(k)=0\}}$, $n\in\mn_0$.
We claim that the so defined $X$ can equivalently be represented as follows:
\bel{eq:Levy_repr_sum2}
X(n)=X(0)+S_\xi(n-T(n-1))+  S_{\eta}(T(n-1)),\quad n\in\mn,
\ee
where $S_\xi(0)=S_\eta(0)=0$ a.s.

To check this, write $$X(n+1)-X(n)=S_\xi(n+1-T(n))-S_\xi(n-T(n-1))+S_{\eta}(T(n))-S_\eta(T(n-1)).$$ Observe now that $X(n)\neq 0$ if, and only if, $T(n-1)=T(n)$ and that on this event $$X(n+1)-X(n)=S_\xi(n+1-T(n))-S_\xi(n-T(n))=\xi_{n+1-T(n)},$$ which is in line with \eqref{tildeS}. On the other hand, $X(n)=0$ if, and only if, $T(n-1)=T(n)-1$ and on this event $$X(n+1)-X(n)=S_\eta(T(n))-S_\eta(T(n)-1)=\eta_{T(n)},$$ which is again in agreement with \eqref{tildeS}.

Put $T(-1)=0$. Using \eqref{eq:Levy_repr_sum2} with $X_v(n)$ replacing $X(n)$, $X_v(0)$ replacing $X(0)$ and $T_v(n)$ replacing $T(n)$, where $T_v$ is a counterpart of $T$ which corresponds to $X_v$, we conclude that  relation \eqref{eq:FCLT2} holds if we can show that
\begin{equation}\label{eq:rel1}
\Big(\frac{S_\xi(\lfloor vt\rfloor-{T_v}(\lfloor vt\rfloor-1))}{a(v)}\Big)_{t\geq 0}~\Rightarrow~ U_\alpha,\quad v\to\infty
\end{equation}
on $D$ and, for all $t_0>0$,
\begin{equation}\label{eq:rel2}
\frac{\sup_{t\in [0,\,t_0]}\,|S_\eta({T_v}(\lfloor vt\rfloor-1))|}{a(v)}~\overset{\mmp}{\to}~0,\quad v\to\infty.
\end{equation}
%are sufficient for \eqref{eq:FCLT2}.

Assume that we can prove that,
%\begin{equation}\label{eq:rel3}
%\lim_{n\to\infty}\frac{T(n)}{n}=0\quad\text{a.s.}
%\end{equation}
%and,
for all $\delta>0$ and all $t\geq 0$,
\begin{equation}\label{eq:rel4}
\frac{{T_v(\lfloor vt\rfloor)} %T(n)
}
{{v}%n
^{1-1/\alpha+\delta}}~\overset{\mmp}{\to}~0,\quad {v} % n
\to\infty.
\end{equation}
The sequence $(T_v(n))_{n\in\mn_0}$ is a.s.\ nondecreasing, and formula \eqref{eq:rel4} implies that, for all $t\geq 0$, $v^{-1}T_v(\lfloor vt\rfloor)\overset{\mmp}{\to}~0$ as $v\to\infty$. Hence, %Formula \eqref{eq:rel3} entails $\lim_{n\to\infty}n^{-1}\max_{0\leq k\leq n}\,T(k)=0$ a.s., whence,
for all $t_1>0$, $$\sup_{t\in [0,\,t_1]}\,\Big|\frac{\lfloor vt\rfloor-{T_v}(\lfloor vt\rfloor-1)}{v}-t\Big|~\overset{\mmp}{\to}~0,\quad v\to\infty.$$  %$$\lim_{v\to\infty}\sup_{t\in [0,\,t_1]}\,\Big|\frac{\lfloor vt\rfloor-T_v(\lfloor vt\rfloor-1)}{v}-t\Big|=0\quad \text{a.s.}$$
Here, the limit function is deterministic, increasing and continuous. The latter limit relation can be combined with \eqref{eq:convStable1823} into $$\Big(\frac{S_\xi(\lfloor vt\rfloor)}{a(v)},\frac{\lfloor vt\rfloor-{T_v}(\lfloor vt\rfloor-1)}{v}\Big)_{t\geq 0}\Rightarrow~(U_\alpha, I),\quad v\to\infty$$ on $D$, where $I(t):=t$ for $t\geq 0$. The left-hand (right-hand) side in \eqref{eq:rel1} is composition of the coordinates on the left-hand (right-hand) side of the last limit relation. By Proposition \ref{prop:WhittConvCompositions} and Remark \ref{remk:corl_Skor_repr}, \eqref{eq:rel1} follows.

To prove \eqref{eq:rel2}, write, for any $\gamma>0$,
\begin{multline*}
\frac{\sup_{t\in [0,\,t_0]}\,|S_\eta({T_v}(\lfloor vt\rfloor-1))|}{a(v)}\leq
\frac{S_{|\eta|}({T_v}(\lfloor vt_0 \rfloor))}{a(v)}\\\leq \frac{S_{|\eta|}({T_v}(\lfloor vt_0 \rfloor))}{a(v)}\1_{\{{T_v}(\lfloor vt_0 \rfloor)>\gamma v^{1-1/\alpha+\delta} \}}+\frac{S_{|\eta|}(\lfloor \gamma v^{1-1/\alpha+\delta} \rfloor)}{a(v)}.
\end{multline*}
In view of \eqref{eq:rel4}, the first term on the right-hand side converges to $0$ in probability, as $v\to\infty$. To analyze the second term, recall that the function $a$ is regularly varying at $\infty$ of index $1/\alpha$.

\noindent If $\me |\eta|<\infty$, then choosing any $\delta\in (0, 2/\alpha-1)$ and invoking the weak law of large numbers for $S_{|\eta|}$ we infer
\begin{equation}\label{eq:int1}
\frac{S_{|\eta|}(\lfloor \gamma v^{1-1/\alpha+\delta} \rfloor)}{a(v)}~\overset{\mmp}{\to}~0,\quad v\to\infty.
\end{equation}
If the distribution of $\eta$ belongs to the domain of attraction of a $\beta$-stable
distribution with $\beta\in (\alpha-1, 1)$, then so does the distribution of $|\eta|$, and according to \eqref{eq:conv}, $S_{|\eta|}(v)/c(v)$ converges in distribution to a positive $\beta$-stable random variable. For any $\delta\in (0, \alpha^{-1}(\beta-(\alpha-1)))$ (such a choice is possible because $\beta>\alpha-1$) $\lim_{v\to\infty}c(\lfloor \gamma v^{1-1/\alpha+\delta} \rfloor)/a(v)=0$ and thereupon \eqref{eq:int1} holds true.

It remains to prove % \eqref{eq:rel3} and
\eqref{eq:rel4}. Observe that, for $v,x>0$ and $n\in\mn_0$, $\mmp\{T_v(n)>x| X_v(0)\neq 0\}\leq \mmp\{T_v(n)>x|X_v(0)=0\}$. In view of this, we assume in what follows that $X_v(0)=0$ a.s.\ and write $T(n)$ for $T_v(n)$. Relation \eqref{eq:rel4} holds if we can show that
\begin{equation}\label{eq:rel454}
\frac{T(n)}{n^{1-1/\alpha+\delta}}~\overset{\mmp}{\to}~0,\quad n\to\infty.
\end{equation}

%Neglecting the first fragment of the path $(X_v(n))_{0\leq n< \sigma(X_v)}$, whose contribution to $T(n)$ is at most $1$, we can and do assume that $X_v(0)=0$ a.s. For each $k\in\mathbb{Z}$, put $\tau_k:=\inf\{i\geq 1: S_\xi(i)=k\}$. Observe that $\tau_k<\infty$ a.s.\ by recurrence of $S_\xi$. By Theorem 6.6.1 on p.~307 in \cite{Durrett:2010}, $\lim_{n\to\infty}T(n)/n=0$ a.s.\ whenever $\me \sigma(X_v)=\infty$. The latter is a consequence of $$\me \sigma(X_v)=1+\me \tau_{-\eta_1}=1+\sum_{k\in\mathbb{Z}}\me \tau_{-k}\mmp\{\eta=k\}\geq \me\tau_{-k_0}\mmp\{\eta=k_0\}$$ for some $k_0\neq 0$ such that $\mmp\{\eta=k_0\}>0$ because $\me \tau_{-k_0}=\infty$ by Wald's identity for $S_\xi$. This proves \eqref{eq:rel3}.
The random variable $T(n)$ has the same distribution as $$T^\prime(n):=1+\sum_{k\geq 1}\1_{\{(1+\tau^{(1)}_{-\eta_1})+\ldots+(1+\tau^{(k)}_{-\eta_k})\leq n\}},\quad n\in\mn_0,$$ where $(\tau^{(1)}_k)_{k\in\mathbb{Z}}$, $(\tau^{(2)}_k)_{k\in\mathbb{Z}},\ldots$ are independent copies of $(\tau_k)_{k\in\mathbb{Z}}$, which are also independent of $\eta_1$, $\eta_2,\ldots$. Fix any $n_0\in \mathbb{Z}\backslash\{0\}$ satisfying $$p_0:=\mmp\{X(1) = n_0 \ |\ X(0)=0\}=\mmp\{\eta=n_0\}>0.$$ Put $\theta_0:=0$ and $\theta_{i+1}:=\inf\{j>\theta_i: \eta_j=n_0\}$ for $i\in\mn_0$. The random variables $\theta_1$, $\theta_2-\theta_1,\ldots$ are independent and have a geometric distribution with success probability $p_0$, that is, $\mmp\{\theta_1=k\}=(1-p_0)^{k-1}p_0$ for $k\in\mn$. Also, $\theta_1$, $\theta_2,\ldots$ are independent of $(\tau^{(1)}_k)_{k\in\mathbb{Z}}$, $(\tau^{(2)}_k)_{k\in\mathbb{Z}},\ldots$ Write
\begin{multline*}
T^\prime(n)-1=\sum_{i\geq 0}\sum_{k=\theta_i+1}^{\theta_{i+1}}\1_{\{(1+\tau^{(1)}_{-\eta_1})+\ldots+(1+\tau^{(k)}_{-\eta_k})\leq n\}}\\\leq \theta_1+\sum_{i\geq 1}(\theta_{i+1}-\theta_i)\1_{\{(1+\tau^{(\theta_1)}_{-\eta_{\theta_1}})+\ldots+(1+\tau^{(\theta_i)}_{-\eta_{\theta_i}})\leq n\}}\\=\theta_1+\sum_{i\geq 1}(\theta_{i+1}-\theta_i)\1_{\{(1+\tau^{(\theta_1)}_{-n_0})+\ldots+(1+\tau^{(\theta_i)}_{-n_0})\leq n\}}\quad\text{a.s.}
\end{multline*}
The latter random variable has the same distribution as $$\theta_1+\sum_{i\geq 1}(\theta_{i+1}-\theta_i)\1_{\{(1+\tau^{({1} %\theta_1
)}_{-n_0})+\ldots+(1+\tau^{({i} %\theta_i
)}_{-n_0})\leq n\}}=\sum_{k=1}^{T^\ast(n)}(\theta_k-\theta_{k-1}),$$ where $T^\ast(n):=1+\sum_{i\geq 1}\1_{\{(1+\tau^{(1)}_{-n_0})+\ldots+(1+\tau^{(i)}_{-n_0})\leq n\}}$, for $n\in\mn_0$, is independent of $\eta_1$, $\eta_2,\ldots$. Summarizing, to prove {\eqref{eq:rel454}} % \eqref{eq:rel4}
it is enough to show that, for all $\delta>0$, $$n^{-(1-1/\alpha+\delta)}\sum_{k=1}^{T^\ast(n)}(\theta_k-\theta_{k-1})~\overset{\mmp}{\to}~0,\quad n\to\infty.$$ By the weak law of large numbers for the random walk $(\theta_i)_{i\in\mn}$, the latter holds provided that $$\frac{T^\ast(n)}{n^{1-1/\alpha+\delta}}~\overset{\mmp}{\to}~0,\quad n\to\infty.$$ According to Lemma 2.1 in \cite{Belkin:1970}, $\mmp\{\tau_0>n\}~\sim~ n^{-(1-1/\alpha)}L_1(n)$ as $n\to\infty$ for some $L_1$ slowly varying at $\infty$. By Theorem T1 on p.~378 in \cite{Spitzer:2001},
\begin{equation}\label{eq:ratio}
\lim_{n\to\infty} \frac{\mmp\{\tau_{-n_0}>n\}}{\mmp\{\tau_0>n\}}=g(n_0)\in [0,\infty),
\end{equation}
where $g$ is the potential kernel of $S_\xi$. By Theorem P2 on p.~361 of the same reference, $g(k)>0$ for all $k\in\mathbb{Z}\backslash\{0\}$ and particularly $g(n_0)>0$. This follows from the fact that $\xi$ has a symmetric distribution with unbounded support, so that $S_\xi$ cannot be a left- or right-continuous random walk.

The sequence $(T^\ast(n))_{n\in\mn_0}$ is the first-passage time (generalized inverse) sequence for $S_{1+\tau_{-n_0}}$. In view of \eqref{eq:ratio} and $g(n_0)>0$, the distribution tail of $1+\tau_{-n_0}$ is regularly varying at $\infty$ of index $-(1-1/\alpha)\in (-1,0)$. Then $\mmp\{\tau_{-n_0}>n\}T^\ast(n)$ converges in distribution to a random variable having a Mittag-Leffler distribution (the distribution of an inverse $(1-1/\alpha)$- subordinator evaluated at time $1$), see, for instance, Theorem 7 in \cite{Feller:1949}. Since, for all $\delta>0$ and any $L^\ast$ slowly varying at $\infty$, $\lim_{n\to\infty}n^\delta L^\ast(n)=\infty$, relation {\eqref{eq:rel454}} %\eqref{eq:rel4}
follows.

The proof of Theorem \ref{thm:lim_skew_Levy_walk}(b) is complete.

\section{Appendix}

\begin{proof}[Proof of Proposition \ref{thm:weak_convergenceEK_chains}]
If condition \eqref{eq:conv_semigroups_restr} prevails, the result follows from a specialization of Theorem 2.11 on p.~172 in \cite{Ethier+Kurtz:2005}.

If condition \eqref{eq:conv_resolvents_restr} holds, then we argue along the lines of the proof of Trotter's approximation theorem (Theorem 4.2 on p. 85 in \cite{Pazy:1983}). An additional useful information can be found in Section 3.6 of the cited book.

We intend to show that \eqref{eq:conv_resolvents_restr} entails \eqref{eq:conv_semigroups_restr}. Let $(T_A(t))_{t\geq 0}$ and $(T_B(t))_{t\geq 0}$ be strongly continuous semigroups defined on Banach spaces $\mathcal A$ and $\mathcal B$ with infinitesimal generators $A$ and $B$ and resolvents $R_\lambda(A):=(\lambda I-A)^{-1},\ {\lambda>0}$ and $R_\lambda(B):=(\lambda I-B)^{-1}$, $\lambda>0$, respectively. Let $\pi: \mathcal A \to \mathcal B$ be
a continuous linear operator. We claim that, for $f\in \mathcal A$,
\bel{eq:resolvents_eq0}
R_\lambda(B)\Big(\pi T_A(t) - T_B(t) \pi  \Big)   R_\lambda(A) f=
\int_0^t  T_B(t-s) \Big(\pi R_\lambda(A)  -  R_\lambda(B)  \pi  \Big) T_A(s) f {\rm d}s,\quad t\geq 0.
\ee
Indeed,
\begin{multline}\label{eq:resolvents_eq1}
\frac{{\rm d}}{{\rm d}s}\big[T_B(t-s)R_\lambda(B)\pi T_A(s)  R_\lambda(A)\big]f\\=
-T_B(t-s) B R_\lambda(B)\pi T_A(s)  R_\lambda(A) f+
T_B(t-s)   R_\lambda(B)\pi T_A(s) A R_\lambda(A) f\\=
-T_B(t-s) B R_\lambda(B)\pi R_\lambda(A)  T_A(s)  f+
T_B(t-s)   R_\lambda(B)\pi  A R_\lambda(A) T_A(s) f\\=
T_B(t-s)\left[ - B R_\lambda(B)\pi R_\lambda(A)  +
R_\lambda(B)\pi  A R_\lambda(A)\right] T_A(s) f.
\end{multline}
Recall that $AR_\lambda(A)= \lambda R_\lambda(A)-I.$ Hence, the right-hand side of \eqref{eq:resolvents_eq1} is equal to
\begin{multline*} %\label{eq:resolvents_eq2}
 T_B(t-s)\left[ - (\lambda R_\lambda(B)-I)\pi R_\lambda(A)  +
    R_\lambda(B)\pi  (\lambda R_\lambda(A)-I)\right] T_A(s) f=\\
    T_B(t-s)\left[ \pi R_\lambda(A)  -  R_\lambda(B)  \pi\right] T_A(s) f,
\end{multline*}
and integration in $s\in [0,t]$ yields \eqref{eq:resolvents_eq0}.

For $n\in\mn_0$, denote by $A^{(n)}$ the infinitesimal generator of $X^{(n)}$. For $n\in\mn$, denote by $B(G_n)$ the Banach space of bounded measurable functions on $G_n$ with the supremum norm $\|\cdot\|$ (the same notation as for the supremum norm in $C_0(\mbR)$) and by $\pi_n f$ the restriction of $f\in C_0(\mbR)$ to $G_n$. Getting back to the setting of the Markov processes, put $\mathcal A:= C_0(\mbR)$, $A:=A^{(0)}$, $T_A(t):=P^{(0)}(t)$ and, for each $n\in\mn$, $\mathcal B:= B(G_n)$, $B:=A^{(n)}$, $T_B(t):=P^{(n)}(t)$ and $\pi f:=\pi_n f$.

For $f\in C_0(\mbR)$,
\begin{multline}\label{eq:1556_res}
\|(P^{(n)}(t) \pi_n -\pi_n P^{(0)}(t)) R_\lambda(A^{(0)}) f\|\leq
\\
\|P^{(n)}(t) (\pi_n R_\lambda(A^{(0)}) - R_\lambda(A^{(n)})\pi_n)  f\|+
\|R_\lambda(A^{(n)})(P^{(n)}(t) \pi_n  - \pi_n P^{(0)}(t))  f\|+\\
\|(R_\lambda(A^{(n)})\pi_n-\pi_n R_{\lambda}(A^{(0)}))P^{(0)}(t) f\|.
\end{multline}
Relation \eqref{eq:conv_resolvents_restr} ensures that the first and the third terms on the right-hand
side of \eqref{eq:1556_res} converge to $0$ as $n\to\infty$. When analyzing the second term on the right-hand side of \eqref{eq:1556_res} we first assume that
$f= R_\lambda(A^{(0)})g$ for $g\in C_0(\mbR)$. It then follows from \eqref{eq:resolvents_eq0} that
\begin{multline*}
\|R_\lambda(A^{(n)})(P^{(n)}(t) \pi_n  - \pi_n P^{(0)}(t))  R_\lambda(A^{(0)})g\|=\\
\Big\|\int_0^t P^{(n)}(t)\big( \pi_n R_\lambda(A^{(0)})- R_\lambda(A^{(n)})\pi_n P^{(0)}(t) \big) P^{(0)}(s)g {\rm d}s\Big\|.
\end{multline*}
The right-hand side converges to $0$ as $n\to\infty$ by \eqref{eq:conv_resolvents_restr} and the Lebesgue dominated convergence theorem. Thus, we have shown that
\bel{eq:EK_conv_semigr_proj}
\|(P^{(n)}(t)\pi_n-\pi_n P^{(0)}(t)) h\|\to 0,\quad n\to\infty
\ee
with $h=R_\lambda(A^{(0)}) f= (R_\lambda(A^{(0)}))^2 g$ where $g$ is an arbitrary function from $C_0(\mbR)$, that is, \eqref{eq:EK_conv_semigr_proj} holds for any $h\in {\rm Dom}\,((A^{(0)})^2)$, the domain of $(A^{(0)})^2$. By Theorem 2.7 on p.6 in \cite{Pazy:1983}, ${\rm Dom}\,((A^{(0)})^2)$ is dense in $C_0(\mbR)$. Hence, relation \eqref{eq:EK_conv_semigr_proj} holds for any $h\in C_0(\mr)$, which is equivalent to \eqref{eq:conv_semigroups_restr}.
\end{proof}
\bigskip
\noindent {\sc Acknowledgement}. The research was supported by the High Level Talent Project DL2022174005L
of Ministry of Science and Technology of PRC.

\end{document}